\tikzset{node distance=2cm, auto}
\DeclareMathOperator{\Cob}{Cob}
\DeclareMathOperator{\PCob}{Pre-Cob}
\newcommand{\conj}[1]{\quad\textnormal{ #1 }\quad}
\newcommand{\inp}[1]{\ensuremath{\langle #1 \rangle}}
\newcommand{\normaltext}[1]{\textnormal{#1}}
\def\imod#1{\allowbreak\mkern2.5mu({\operator@font mod}\,#1)}
\renewcommand{\a}{\alpha}
\renewcommand{\b}{\beta}
\newcommand{\opp}{\oplus}
\newcommand{\ott}{\otimes}
\newcommand{\s}{\sigma}
\renewcommand{\d}{\delta}
\newcommand{\bA}{\mathbf{A}}
\newcommand{\CC}{\mathbb{C}}
\newcommand{\LL}{\mathbb{L}}
\newcommand{\RR}{\mathbb{R}}
\newcommand{\ZZ}{\mathbb{Z}}
\newcommand{\kk}{\mathbb{k}}
\theoremstyle{plain}
\newtheorem{thm}{Theorem}[section]
\newtheorem*{introthm}{Theorem}
\newtheorem{theorem}[thm]{Theorem}
\newtheorem{prop}[thm]{Proposition}
\newtheorem{lemma}[thm]{Lemma}
\theoremstyle{remark}
\theoremstyle{definition}
\newtheorem{example}[thm]{Example}
\newtheorem{definition}[thm]{Definition}
\newtheorem{rmk}[thm]{Remark}
\newtheorem{remark}[thm]{Remark}
\numberwithin{equation}{section}
\def\R{\mathbb R}
\newcommand{\ncap}{\scalebox{.8}[1]{$\bigcap$}}
\newcommand{\ncup}{\scalebox{.8}[1]{$\bigcup$}}
\newcommand{\nx}{\scalebox{.7}[1]{\scalebox{1.3}{$\bigtimes$}}}
\begin{document}

\title{An Extension of Khovanov Homology to Immersed Surface Cobordisms}
\date{10th November, 2025.}
\author[Carter]{Scott Carter}
\address{University of South Alabama, Department of Mathematics and Statistics, 411 University Boulevard North, Mobile, AL 36688-0002 USA}
\email{carter\char 64 southalabama.edu}

\author[Cooper]{Benjamin Cooper}
\address{University of Iowa, Department of Mathematics, 14 MacLean Hall, Iowa City, IA 52242-1419 USA}
\email{ben-cooper\char 64 uiowa.edu}

\author[Khovanov]{Mikhail Khovanov}
\address{Johns Hopkins University, Department of Mathematics, 404 Krieger Hall, 3400 N. Charles Street, 
Baltimore, MD 21218 USA}
\email{khovanov\char 64 jhu.edu}

\author[Krushkal]{Vyacheslav Krushkal}
\address{Department of Mathematics, University of Virginia, 
Charlottesville, VA 22904-4137 USA}
\email{krushkal\char 64 virginia.edu}

\def\JS#1{\textcolor[rgb]{0,.75,.8}{ [JS: #1]}}
\def\BC#1{\textcolor[rgb]{0,.5,1}{ [BC: #1]}}
\def\MK#1{\textcolor[rgb]{0.7,0.1,.5}{ [MK: #1]}}
\def\SK#1{\textcolor[rgb]{1,.61,0}{ [SK: #1]}}
\def\OLD#1{\textcolor[rgb]{1,.69,.4}{ [OLD STUFF: #1]}}
\def\OPT#1{\textcolor[rgb]{1,.61,0}{ [OPTIONAL: #1]}}

\newcommand{\tm}{\widetilde{m}}
\newcommand{\dga}[0]{\operatorname{-dga}}
\newcommand{\dgca}[0]{\operatorname{-dgcoa}}
\newcommand{\Om}{\Omega}
\newcommand{\pa}{\partial}

\newcommand{\Ainf}{A_\infty}
\newcommand{\varB}[1]{{\operatorname{\mathit{#1}}}}
\renewcommand{\slash}[1]{H_/(#1)}
\newcommand{\pAinf}[0]{\varB{p-\Ainf}\!}
\newcommand{\semp}{{\{\emptyset\}}}
\newcommand{\z}{z}%{{\bf r}}
\newcommand{\T}{T}
\renewcommand{\d}{\delta}
\newcommand{\h}{h}
\renewcommand{\t}{t}
\newcommand{\G}{\Gamma} %minimal arc system
\newcommand{\A}{\Lambda} %minimal arc system
\renewcommand{\bA}{\bar{\Lambda}} %minimal arc system

\renewcommand{\kk}{k}

\newcommand{\lab}[1]{\normaltext{#1}}
\newcommand{\nt}[1]{\normaltext{#1}}
\newcommand{\Ya}{\mathcal{Y}\hspace{-.1475em}a_{2,2}}
\newcommand{\vnp}[1]{\lvert #1 \rvert}
\newcommand{\vvnp}[1]{\lvert\lvert #1 \rvert\rvert}
\newcommand{\I}{[0,1]}
\newcommand{\xto}[1]{\xrightarrow{#1}}
\newcommand{\xfrom}[1]{\xleftarrow{#1}}
\newcommand{\from}{\leftarrow}

\newcommand{\dgcat}{\normaltext{dgcat}_k}
\newcommand{\Forget}{\normaltext{Forget}}
\newcommand{\Mat}{Mat}
\newcommand{\op}{\normaltext{op}}
\newcommand{\ab}{\normaltext{ab}}

\newcommand{\Vect}{Vect}
\newcommand{\Kom}{Kom}
\newcommand{\Set}{Set}
\newcommand{\Ch}{Ch}
\newcommand{\Hom}{Hom}
\renewcommand{\d}{\delta}

\newcommand{\pre}[0]{\operatorname{pre-}}
\newcommand{\coker}[0]{\operatorname{coker}}
\newcommand{\im}[0]{\operatorname{im}}

\newcommand{\Br}{Br}

\newcommand{\BrX}{X}
\renewcommand{\b}{\beta}
\newcommand{\fun}[1]{\kk[#1]}
\newcommand{\tH}{H}
\newcommand{\HP}{HP}
\newcommand{\HC}{HC}
\newcommand{\HHH}{HHH}
\newcommand{\End}{End}

\newsavebox{\negC}
\begin{lrbox}{\negC}
\begin{tikzpicture}
\draw[ultra thick,->] (0,0) -- (0+1,0+1);
  \fill[white] (.5,.5) circle (4pt);
\draw[ultra thick,->] (0+1,0) -- (0,0+1);
\end{tikzpicture}
\end{lrbox}

\newsavebox{\unegC}
\begin{lrbox}{\unegC}
\begin{tikzpicture}
\draw[ultra thick,-] (0,0) -- (0+1,0+1);
  \fill[white] (.5,.5) circle (4pt);
\draw[ultra thick,-] (0+1,0) -- (0,0+1);
\end{tikzpicture}
\end{lrbox}

\newsavebox{\singC}
\begin{lrbox}{\singC}
\begin{tikzpicture}
\draw[ultra thick,->] (3,0) -- (4,1);
\fill[black] (3.5,.5) circle (2pt);
\draw[ultra thick,->] (4,0) -- (3,1);
\end{tikzpicture}
\end{lrbox}

\newsavebox{\usingC}
\begin{lrbox}{\usingC}
\begin{tikzpicture}
\draw[ultra thick,-] (3,0) -- (4,1);
\fill[black] (3.5,.5) circle (2pt);
\draw[ultra thick,-] (4,0) -- (3,1);
\end{tikzpicture}
\end{lrbox}

\newsavebox{\posC}
\begin{lrbox}{\posC}
\begin{tikzpicture}
\draw[ultra thick,->] (7,0) -- (6,1);
  \fill[white] (6.5,.5) circle (4pt);
\draw[ultra thick,->] (6,0) -- (7,1);
\end{tikzpicture}
\end{lrbox}

\newsavebox{\uposC}
\begin{lrbox}{\uposC}
\begin{tikzpicture}
\draw[ultra thick,-] (7,0) -- (6,1);
  \fill[white] (6.5,.5) circle (4pt);
\draw[ultra thick,-] (6,0) -- (7,1);
\end{tikzpicture}
\end{lrbox}

\newsavebox{\idTLtwo}
\begin{lrbox}{\idTLtwo}
\begin{tikzpicture}
\draw[ultra thick,black] (0,0)  .. controls (0.3,0.5) .. (0,1);
\draw[ultra thick,black] (1,0)  .. controls (0.6,0.5) .. (1,1);
\end{tikzpicture}
\end{lrbox}

\newsavebox{\eTLtwo}
\begin{lrbox}{\eTLtwo}
\begin{tikzpicture}
\draw[ultra thick,black] (0,1)  .. controls (0.5,0.6) .. (1,1);
\draw[ultra thick,black] (0,0)  .. controls (0.5,0.3) .. (1,0);
\end{tikzpicture}
\end{lrbox}

\newsavebox{\etTLtwo}
\begin{lrbox}{\etTLtwo}
\begin{tikzpicture}
\draw[ultra thick,black] (0,1)  .. controls (0.5,0.6) .. (1,1);
\fill[black] (.5,.72) circle (2pt);
\draw[ultra thick,black] (0,0)  .. controls (0.5,0.3) .. (1,0);
\end{tikzpicture}
\end{lrbox}

\newsavebox{\edTLtwo}
\begin{lrbox}{\edTLtwo}
\begin{tikzpicture}
\draw[ultra thick,black] (0,1)  .. controls (0.5,0.6) .. (1,1);
\fill[black] (.5,.2375) circle (2pt);
\draw[ultra thick,black] (0,0)  .. controls (0.5,0.3) .. (1,0);
\end{tikzpicture}
\end{lrbox}

\newcommand{\sss}{\star}
\newcommand{\C}{C}
\newcommand{\Frob}{V}
\newcommand{\bbeps}{1.618}
\newcommand{\hbbeps}{1.618/2}
\newcommand{\nbbeps}{2*1.618/3}
\newcommand{\bneps}{4*.618/3}
\newcommand{\beps}{.618}
\newcommand{\neps}{2*\beps/3}
\newcommand{\teps}{\beps/2}
\newcommand{\eps}{\beps/3}
\newcommand{\heps}{\eps/2}
\newcommand{\nshift}{10}
\newcommand{\PC}{\mathcal{C}\!{\it ob}}
\renewcommand{\I}{I}
\newcommand{\TL}{TL}
\newcommand{\Tr}{Tr}
\newcommand{\CKh}{CKh}
\newcommand{\Kh}{Kh}
\newcommand{\Amap}{A}
\newcommand{\Bmap}{B}

\begin{abstract}
We show that an oriented surface in $\RR^4$ containing double point singularities induces
a map between the Khovanov homology groups of its boundary links in a functorial
way. As part of this work, the movie moves of Carter and Saito are extended to surfaces with double points.
  \end{abstract}

\maketitle

\section{Introduction}\label{sec-intro}

From the beginning \cite[\S 1]{KhovanovJones} it was understood that a
smooth oriented surface in $\RR^3\times I$ should induce maps between
Khovanov homology groups in a functorial way. One reason for this is that,
up to a small perturbation, for a surface $\Sigma \subset \RR^3\times I$ the projection onto the time axis $I$ induces a Morse decomposition of
$\Sigma$ and assigning the maps from Khovanov's construction to the pieces produces a chain map between the chain complexes associated to the boundary links $\partial \Sigma$. 
Several years later M. Jacobsson showed
that this assignment was independent of the isotopy representative of the
surface up to sign \cite{MR2113903}. He did this by checking the movie
moves of S. Carter and M. Saito~\cite{MR1487374}.  Since
that time, simpler proofs have been found \cite{MR2171235, MR2174270} and
there have been several approaches to the sign problem \cite{MR2647055, MR2496052, MR4376719,Ca}.

Functoriality of link homology is crucial for its applications to four-dimensional topology. 
More concretely, the functoriality of Khovanov homology paved the way for
J. Rasmussen's $s$-invariant $s(K)\in \ZZ$ and his proof that it gives a lower
bound on the $4$-ball genus of a knot $K$ \cite{MR2729272}. Recently, the functoriality of Khovanov
homology became an essential ingredient of the skein lasagna invariant and 
its applications to $4$-manifold topology \cite{MWW, Renwillis}.

Our purpose in this article is to introduce a new direction of study for the functoriality of Khovanov homology.
We extend the type of surfaces which can induce maps between Khovanov homology groups by considering smooth orientable surfaces in $\RR^3\times I$ with double point singularities.
A double point singularity is locally modeled on a transverse intersection of
two planes in $\RR^4$ or the affine variety 
\begin{equation}\label{eq:zl}
Z := \{ (w,z) : wz = 0 \} \subset \CC^2\end{equation}
in a neighborhood of the origin.
The structure of $Z$ near the singular point can be described by a movie of 3-dimensional time slices of the form
\begin{equation}\label{eq-doublepointmovie}
\begin{tikzpicture}
\node at (.5,.5) {\usebox{\unegC}};
\node at (3.5,.5) {\usebox{\usingC}};
\node at (6.5,.5) {\usebox{\uposC}};
\draw [->,thick]  (1.5,.5) -- (2.5,.5);
\draw [->,thick]  (4.5,.5) -- (5.5,.5);
\end{tikzpicture}
\end{equation}
and the link $Z\cap\{ v : \vnp{v} = \epsilon \}$ 
of the double point singularity is a Hopf link in the $3$-sphere.  
We use the Khovanov homology of this Hopf link to assign a map to a smooth
oriented surface with singularities in $\RR^3\times I$ 
and prove that the homotopy class of this chain map is independent of the isotopy class of singular surface.
In order to establish our result, we extend the known movie moves to include surfaces with double point singularities and we check that our assignments satisfy these new movie moves.  Here is an informal statement of our work.

\newcommand{\Tang}{Tang}
\newcommand{\khtarget}{\mathcal{H}}

\begin{introthm}
The maps induced on the Khovanov homology of oriented links are well-defined, up to an overall sign, on isotopy classes of surface cobordisms with double points. 

These maps give rise to a functor from the category of surface cobordisms with double points between oriented links in $\R^3$ to the category of bigraded abelian groups and homogeneous maps between them, considered up to overall minus sign.
\end{introthm}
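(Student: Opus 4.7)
The plan is to follow M.~Jacobsson's strategy~\cite{MR2113903} for proving functoriality of Khovanov homology under smooth surface cobordisms, now extended to allow double point singularities. Given an oriented surface cobordism $\Sigma \subset \RR^3\times I$ with transverse double points, first perturb $\Sigma$ generically so that the projection onto the $I$-factor is Morse both on the smooth stratum and when restricted to the singular set. This decomposes $\Sigma$ into a finite sequence of elementary events: Morse critical events (birth, death, saddle) of the smooth part, planar isotopies, Reidemeister moves of the cross-sectional link, and the new singular event pictured in \eqref{eq-doublepointmovie} which encodes the passage through a single double point. To $\Sigma$ I would then assign the composition of chain maps attached to each elementary event, and prove that the resulting homotopy class depends only on $\Sigma$ up to overall sign.

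The key new ingredient is the chain map attached to the singular event \eqref{eq-doublepointmovie}. Between the Khovanov complexes of a negative and a positive crossing tangle there is a natural morphism passing through the resolutions of the singular crossing: each crossing complex is built from the two planar resolutions, and the singular crossing canonically mediates between them. The map I would use is the one induced by Khovanov's cobordism/TQFT formalism applied to the singular surface swept out during the event, motivated by the fact that the link of the singularity is a Hopf link whose Khovanov complex supports a canonical morphism between the two crossing complexes. At the level of tangle cobordisms this gives a well-defined chain map up to overall sign on the local crossing neighborhood, and one extends it globally by the identity elsewhere in the diagram.

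Next one must extend the Carter--Saito movie move theorem to the singular setting. Following the classification approach in~\cite{MR1487374}, I would enumerate the codimension-one degenerations of the Morse projection that can occur between two generic representations of $\Sigma$ and isolate those that involve at least one singular event. These new moves fall into a few families: commuting a singular event with a distant Morse or Reidemeister event, sliding a singular event past a nearby Morse critical point, reversing the direction of a singular event, and merging or cancelling two singular events. For each new move, I would check directly at the chain level that the two possible compositions of elementary maps agree up to an overall sign; this is a finite combinatorial task once the elementary maps are pinned down. The Carter--Saito moves not involving any singular event are already handled by \cite{MR1487374, MR2113903, MR2171235, MR2174270}.

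The main obstacle is twofold. First is the combinatorial enumeration of the new movie moves, which requires a careful local analysis of how the singular event interacts with every neighboring feature, and the attendant chain-level verification for each; this is mechanical but lengthy. Second, and more subtle, is sign control: the ambiguity in each elementary map must globalize to a single $\pm 1$ depending only on the isotopy class of $\Sigma$, not on the movie representative. As in the smooth case, this requires a universal argument fixing a consistent set of orientation conventions and then checking that new sign discrepancies introduced by the singular event propagate only as the same overall sign already present in the smooth theory. Granting this, functoriality is automatic from the construction: vertical stacking of movies realizes composition of cobordisms and the corresponding chain maps, the empty movie yields the identity, and passage to the homotopy category modulo $\pm 1$ gives the desired functor.
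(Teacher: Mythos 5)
Your overall skeleton matches the paper's: decompose the cobordism into elementary events, assign a chain map to the new double-point event, extend the Carter--Saito movie moves to the singular setting, and verify the new moves up to homotopy and sign. But there is a genuine gap at the heart of the proposal: you never actually define the chain map assigned to the singular event \eqref{eq-doublepointmovie}. Saying it is ``induced by Khovanov's cobordism/TQFT formalism applied to the singular surface'' is circular --- the TQFT only evaluates embedded surfaces, and the whole point is to \emph{choose} what to assign to a double point. The Khovanov homology of the Hopf link (the link of the singularity) has four $\ZZ$-summands, so ``a canonical morphism'' does not single anything out. The paper makes a specific choice (Def.~\ref{def: AB}): a map $A\colon C^+\to C^-$ of homological degree $-2$ given by an alternating sum of dotted identity cobordisms, and a map $B\colon C^-\to C^+$ of homological degree $+2$ given by the identity cobordism between the off-degree-zero resolutions; these are pinned down as generators of specific $(t,q)$-bidegree summands $\ZZ_{-2,-6}\subset H_-$ and $\ZZ_{2,4}\subset H_+$ (Prop.~\ref{prop-abhopf}). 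In particular the maps shift homological degree by $\mp 2$ and differ according to the sign of the double point --- features your proposal does not anticipate and which are essential to the statement (``homogeneous maps,'' not degree-zero maps). Your description of a map ``passing through the resolutions of the singular crossing'' sounds closer to the degree-zero assignments of Weng or Ito--Yoshida, which the paper explicitly distinguishes from its own.

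Two further points. First, your enumeration of new movie moves includes ``merging or cancelling two singular events,'' which cannot occur: the number of double points is an isotopy invariant of the immersed surface (the paper notes that a node absorbed at a type-I branch point would change this number, so no such move exists). The genuinely new moves are a node passing through a type-II move (MM16), through a type-III move (MM17), over a fold (MM18), and various far-commutations; these are extracted systematically from the chart description in Theorem~\ref{thm: movie moves}, not by an ad hoc codimension-one analysis. Second, your plan to verify each new move by direct chain-level computation is in principle viable but is not how the paper proceeds: the paper's verification of MM16 and MM17 uses a duality argument (Lemma~\ref{lemma-d4}) identifying the relevant $\Hom$-complex homology with a ($q$-shifted) Hopf link homology, shows it is $\ZZ$ in the bidegree of $A$ or $B$, and shows both compositions are generators, whence they agree up to sign (Lemma~\ref{lem-movie}). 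This sidesteps all explicit homotopies and is also what makes the sign control you worry about essentially automatic for the new moves. Without a concrete definition of the maps $A$ and $B$ and the degree bookkeeping that goes with them, the rest of the argument cannot be carried out.
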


Let us state the results contained in our paper with a little more care. There is a 2-category $\Tang^4$ with objects given by finite collections $S$ of $\pm$-oriented points in the plane $\R^2$. 
 For any two such collections, $S$ and $S'$, there is a category $\Tang^4(S,S')$ whose objects are oriented tangles $T\subseteq \R^2\times [0,1]$, viewed as cobordisms from $S\times \{0\}$ to $S'\times \{1\}$ (and defined as a proper embedding of a compact oriented one-manifold into  $\R^2\times [0,1]$ rather than an equivalence class of embeddings rel boundary isotopy). 
A morphism $f: T\to T'$ between tangles is a cobordism in $4$-space. More carefully, it is an isotopy (rel boundary) class of a surface with corners standardly embedded in $\mathbb{R}^2\times [0,1]^2$ which cobounds tangles $T,T'$ in the two parallel boundary $\R^2\times [0,1]$'s and has the standard boundary $S\times [0,1]$ and $S'\times [0,1]$ on the two remaining boundary $\R^2\times [0,1]$'s. There is an extension of this category
$$\Tang^4(S,S') \subset \Tang_\times^4(S,S')$$
which has the same objects, but whose morphisms are isotopy classes of immersed surfaces with the  double point singularities locally modelled on $(Z,0)$ in Eqn. \eqref{eq:zl} above.

The next theorem is a statement about isotopy of surfaces in $4$-space. Recall that if $[f] : T\to T'$ is a morphism in $\Tang^4(S,S')$ then
any two representatives $f,f'\in[f]$ are related by a finite sequence of the Carter-Saito movie moves.

\begin{introthm}
There is an extension of the Carter-Saito movie moves from the setting of $\Tang^4(S,S')$ to the setting of $\Tang^4_\times(S,S')$. More precisely, there is a finite set of extended movie moves such that if $[f] : T\to T'$ is an isotopy class of immersed surface with double point singuarities in $\Tang^4_\times(S,S')$ then
any two representatives $f,f'\in[f]$ are related by a finite sequence of these extended moves.
\end{introthm}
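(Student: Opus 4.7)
The plan is to follow the stratification-theoretic strategy used by Carter and Saito, enlarged to the configuration space of immersions with transverse double points. The underlying idea is that any isotopy class in $\Tang^4_\times(S,S')$ contains a generic representative, and any two generic representatives are joined by a one-parameter family of immersions which crosses only codimension-one singular strata of the appropriate jet space. Each such crossing corresponds to one local move on the movie presentation, so the extended list of moves is precisely the enumeration of these strata.

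First I would make the movie presentation for an immersed cobordism precise. After an ambient isotopy of $\R^2\times[0,1]^2$, the composition of $f$ with the projection to $\R\times[0,1]^2$ becomes a stable map, and its critical structure with respect to the last coordinate $t\in[0,1]$ displays a sequence of time slices containing: (i) Reidemeister crossings at isolated times; (ii) Morse events (births, deaths, saddles); and (iii) isolated times at which a transverse double point of $f$ appears as a single rigid crossing, as in \eqref{eq-doublepointmovie}. Thom-Mather multi-jet transversality then implies that such generic presentations form an open dense subset of the space of immersions with transverse double points, and that any two generic representatives of $[f]$ are joined by a generic one-parameter family.

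The main step is the classification of codimension-one strata crossed by a generic path. These fall into two groups. The first group recovers the codimension-one phenomena of the smooth case and produces the Carter-Saito movie moves of \cite{MR1487374}. The second, new group consists of strata involving double points: (a) a singular crossing coinciding in time with an ordinary Reidemeister move, giving moves that commute a double point past each of $R_1$, $R_2$, $R_3$; (b) a singular crossing coinciding with a Morse event, giving moves past a birth, death, or saddle; (c) two singular crossings colliding at the same time, yielding reordering moves; and (d) the genuinely four-dimensional events of a transverse triple point (three sheets of $f$ meeting at a point) and a Whitney-type tangency in which a pair of double points is created or cancelled. Each type contributes a finite combinatorial list of moves once the orientation and over/under data of the affected strands are recorded.

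The principal obstacle is the complete and irredundant enumeration of this list. One must carry out a systematic dimension count in the appropriate multi-jet space to verify that the strata (a), (b), (c), (d) together with the smooth ones exhaust all codimension-one phenomena, and that no further degeneracy, such as a double point whose sheets are tangent to the projection direction, or a double point coinciding with a cusp in the movie projection, has been overlooked. A convenient bookkeeping device, as in Carter-Saito, is to encode movies by planar charts equipped with an added vertex type marking double points; the verification then reduces to classifying local modifications of such charts at codimension-one events. Once this classification is carried out, the theorem follows from the standard principle that generic paths in the enlarged space of immersions decompose into transverse passages through the listed strata, each of which is implemented by one of the extended moves.
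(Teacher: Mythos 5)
Your overall strategy (generic movie presentations, generic one-parameter families, enumeration of codimension-one strata, chart bookkeeping) is the same one the paper uses, but your list of new codimension-one events contains errors that would sink the theorem. The central point you are missing is that the theorem is about \emph{isotopy}, so the number of double points of the surface is constant throughout the deformation; the nodes are a fixed $0$-dimensional subset that can only slide along the double point arcs of the chart. Your item (d) is therefore wrong on both counts: a Whitney-type tangency creating or cancelling a pair of double points is a \emph{regular homotopy} event, not an isotopy event, and must not appear in the list (indeed, if it did, the maps $A,B$ of Definition \ref{def: AB} could not satisfy it, since they shift homological degree and a cancelling pair would have to compose to the identity); and three sheets of $f$ meeting at a point of $\R^4$ is a codimension-two phenomenon for surfaces in $4$-space, so it does not occur in a generic one-parameter family at all --- the triple points relevant here are triple points of the projection to $(x,y,s)$-space, which are just the Reidemeister III vertices already present in the embedded theory. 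Similarly, your item (a) cannot include passing a node through a type-I move: the crossing arc terminates at the branch point, so the node would have to disappear, contradicting the constancy of the number of double points (Figure \ref{fig:R}); a node commutes with an $R_1$ only in the ``distant'' sense, when its double point arc is not the one involved in the move.

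Beyond these errors, the enumeration you defer as ``the principal obstacle'' is exactly the content of the proof, and the mechanism that closes it is the one you do not state: in the chart the double point arcs are horizontal, so a node moves only by (i) interchanging its $s$-coordinate with that of a vertex not on its arc (yielding the commutation moves of Theorem \ref{thm: movie moves}(5) and MM19), or (ii) passing through a vertex lying on its own arc, which can only be a type-II vertex, a type-III vertex (and then only through the middle strand), or a $\psi$-move vertex where the crossing arc passes over a fold --- giving precisely MM16, MM17 and MM18. Your proposal omits the $\psi$-move case entirely (you flag ``a double point coinciding with a cusp or a fold tangency'' as a worry but do not resolve it), and without the horizontality observation you have no argument that your list of strata is exhaustive.
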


The extended movie moves are catalogued, with references to illustrations, by Thm. \ref{thm: movie moves} in Section \ref{sec-moviemoves}.

Our second theorem below uses this theorem to extend the functoriality of Khovanov homology.
For context, recall that Khovanov homology associates to a cobordism $f$ a chain map $f_*$. If $f' \in [f]$ then invariance under the Carter-Saito moves, up to overall sign, implies that there is a chain homotopy $f_*\simeq \pm f'_*$. This is the key step in showing that the Khovanov construction determines a $2$-functor $\kappa : \Tang_b^4\to \widehat{\mathbb{K}}$, where $\widehat{\mathbb{K}}$ is the 2-category defined in \cite{MR2171235},  also denoted $K^\flat(\PC_V)/\{\pm 1\}$ in \S\ref{ssec-khovanov} below and throughout the paper, and see~\cite{MR2174270} for an alternative approach and a generalization to the equivariant case. (In the 2-category $\widehat{\mathbb{K}}$, 2-morphisms are defined up to overall sign.) 

Here $\Tang_b^4\subset \Tang^4$ is a full 2-subcategory of $\Tang^4$ where the objects are \emph{balanced} collections $S\subset \R^2$, i.e., with the same number of positive and negative points. Likewise one defines a full 2-subcategory  $\Tang^4_{\times,b}$ of $\Tang^4_{\times}$. 
Our extension of $\kappa$ from $\Tang_b^4$ to $\Tang^4_{\times,b}$ is defined by assigning maps to the double point cobordism and checking that the extended Carter-Saito moves hold.

\begin{introthm}
Assigning maps $A, B$ in Def. \ref{def: AB} to the double point cobordism  in Eqn. \eqref{eq-doublepointmovie} determines an extension 
$\tilde{\kappa} \colon \Tang^4_{\times,b} \to \widehat{\mathbb{K}}$
of the Khovanov $2$-functor $\kappa : \Tang^4_b\to \widehat{\mathbb{K}}$ from embedded balanced cobordisms $\Tang_b^4$ to balanced cobordisms with double points $\Tang^4_{\times,b}$.
\end{introthm}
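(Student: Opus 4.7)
My plan is first to define $\tilde{\kappa}$ and then to establish that it is well-defined. On objects and 1-morphisms I would take $\tilde{\kappa}$ to agree with $\kappa$, which is possible since $\Tang^4_{\times,b}$ has the same objects and 1-morphisms as $\Tang^4_b$. For a 2-morphism $[f]\colon T\to T'$ in $\Tang^4_{\times,b}$, I would choose a movie representative of $f$: a finite sequence of time-slice snapshots in which each critical frame is either a planar isotopy, a Reidemeister move, a Morse birth/death/saddle, or an instance of the double-point cobordism of Eqn.~\eqref{eq-doublepointmovie}. To this movie I associate the composition of Khovanov's chain maps for the planar, Reidemeister, and Morse frames together with $A$ or $B$ from Def.~\ref{def: AB} for each double-point frame, viewed as a morphism in $\widehat{\mathbb{K}}$.

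Second, I would establish well-definedness up to overall sign. By the extended movie-move theorem above, any two movie representatives $f, f'$ of the same isotopy class $[f]$ are related by a finite sequence of extended Carter--Saito moves, enumerated in Thm.~\ref{thm: movie moves}. Invariance under the classical Carter--Saito moves is precisely the content of the original proof that $\kappa$ is well-defined on $\Tang^4_b$, so nothing new needs to be verified there. What remains is to check, for each new movie move involving a double-point frame, that the chain maps produced by its two sides agree up to overall sign in $\widehat{\mathbb{K}}$.

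Third, I would verify each new move by a local computation. Since every elementary chain map factors through a small planar neighborhood of its critical event, each side of a new movie move becomes a composition of a bounded number of elementary Frobenius-algebra maps (multiplication, comultiplication, unit, counit, identity) together with $A$ and $B$. Comparing the two compositions reduces to a finite identity in the Bar--Natan cobordism category underlying Khovanov's construction, with any residual sign discrepancy absorbed by the $\pm 1$ quotient in $\widehat{\mathbb{K}}$. Once all such identities are established, $2$-functoriality of $\tilde{\kappa}$ is immediate from the construction: vertical composition of 2-morphisms corresponds to stacking movies and hence to composing the associated chain maps, while horizontal composition is respected because all elementary pieces are local and disjoint unions of movies produce tensor products of chain maps.

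The main obstacle will be the third step. Although each individual new movie move reduces to a finite verification, the list of moves involving double points, together with the various strand orientations and the choices of $A$ versus $B$, produces a sizable case analysis. The principal labor of the proof is therefore the systematic organization of these checks, leveraging locality and symmetries between related moves wherever possible, and ensuring that the sign bookkeeping is uniformly coherent across the whole catalogue of extended moves.
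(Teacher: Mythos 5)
Your overall architecture --- agree with $\kappa$ on objects and $1$-morphisms, insert $\Amap$ or $\Bmap$ at each double-point frame, and reduce well-definedness to the extended movie moves of Thm.~\ref{thm: movie moves} --- is exactly the paper's. The divergence is in how the new moves are verified, and this is where the real content of the theorem lives. The paper does \emph{not} check MM16 and MM17 by a direct local computation in the cobordism category. Instead it uses an indirect homological argument (Lem.~\ref{lem-movie}): for each square, the two relevant $\Hom^*$-complexes are identified, via the duality of Lem.~\ref{lemma-d4}, with the ($q$-shifted) Khovanov chain complex of a Hopf link (for MM16) or of a Hopf link disjoint union an unknot (for MM17); the homology of that link in the bidegree of $\Bmap$, namely $(2,4)$, or of $\Amap$, namely $(-2,-6)$, is exactly $\ZZ$; and Lem.~\ref{lem-invert} (tensoring with an invertible braid complex) shows that $\Bmap\ott 1$ and $1\ott\Bmap$ are each generators of that $\ZZ$. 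Two generators of $\ZZ$ agree up to sign, so the square commutes up to homotopy and sign with no explicit homotopy ever constructed. MM18 and the far-commutativity moves are identities.

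The weak point in your plan is the claim that each new move ``reduces to a finite identity in the Bar--Natan cobordism category.'' It does not: the vertical arrows in the MM16 and MM17 squares are Reidemeister II and III chain homotopy equivalences, not cobordism identities, and the two composites $\eta\alpha$ and $\gamma\beta$ are generally \emph{not} equal as compositions of elementary cobordisms --- they are only chain homotopic. A direct verification would therefore require you to write out the R3 equivalence between the two $8$-term cube complexes, compose with $\Bmap\ott 1$ and $1\ott\Bmap$, and exhibit an explicit chain homotopy between the results (for each of the eight orientation cases and each version of R3). That is possible in principle, in the spirit of Jacobsson's original check of the embedded movie moves, but it is a substantially harder computation than your sketch suggests, and you have not indicated how you would produce the homotopies. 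Either supply that computation or adopt the paper's route: compute $H_\sss(\Hom^*(-,-))$ via duality and the Hopf link homology of Ex.~\ref{ex-dual}, and invoke $\mathrm{Aut}(\ZZ)=\{\pm 1\}$.
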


Unlike the maps which are assigned to embedded
cobordisms, the maps we associate to surface cobordisms with double point singularities
change the homological degree. Another interesting aspect of our construction is that the maps associated with positive and negative double points are different.
These new features may be useful for applications to
topology. The study of surfaces in 4-manifolds, and particularly the question of when double points can be removed by a homotopy, is fundamental in 4-dimensional topology. Indeed, the failure of the Whitney trick in dimension 4 underlies the difference between smooth and topological 4-manifolds \cite{Donaldson, Freedman}, see also \cite{CG, Kronheimer}.
Developing a new tool to study surfaces with double points was a motivation for this work.

Related constructions appear in the recent works~\cite{ISST}, \cite{RSWWZ}. See Section \ref{sec: related} and especially items (\ref{rmk_ISST}), (\ref{rmk_RSWWZ}) in that section for a more detailed discussion.

{\bf Organization.}
\S \ref{sec-review} contains a brief review of Khovanov homology. 
\S \ref{sec-extension} introduces the maps assigned to double point singularities and checks that these maps satisfy the movie moves up to homotopy.
 \S\ref{sec-moviemoves} contains a discussion of movie moves and shows how to extend them to the setting of surfaces cobordisms with double points.

{\bf Acknowledgments.} The second author would like to thank Kevin Walker for a helpful conversation. The third author was partially supported by NSF grant DMS-2204033 and Simons
Collaboration Award 994328 ``New Structures in Low-Dimensional Topology''. The fourth author was supported in part by NSF Grant DMS-2405044.

\section{Review of Khovanov homology}\label{sec-review}

Here we review the construction of Khovanov homology and recall the duality
statements which will be used to check movie moves in \S\ref{sec-extension}.
The ideas here are equivalent to those of references such as~\cite{MR2171235, MR2174270}. Compared to \cite[\S 2.3]{MR2901969}, the $2$-category $\PC$ is analogous to $\PCob(n)$ and $\PC_V$ to $\Cob(n)$. Set $I:=[0,1]$.

There is a $2$-category $\PC$ with 
\begin{enumerate}
\item Objects given by disjoint collections of points on a line $S\subset \I$. We also use the symbol $S$ to denote its cardinality $\#S\in \ZZ_{\geq 0}$.
\item $1$-morphisms $D : S\to S'$ are  formally $q$-graded direct sums $D=\opp_{i=1}^N q^{n_i} D_i$ of $1$-manifolds $D_i : S\to S'$ in $I^2$ which bound the $0$-manifolds $S$ and $S'$ in the sense that $S\subset \I\times \{1\}$, $S'\subset I\times \{0\}$ and $\partial D_i= S \cup S'$ for $1\leq i \leq n$,
\item A $2$-morphism between $1$-morphisms $D, D' : S\to S'$ is a $\ZZ$-linear combination of surfaces $\Sigma : D \to D'$ 
 in $\I^3$. In more detail, if $D=\opp_{j=1}^N q^{n_i} D_i$ and $D'=\opp_{i=1}^M q^{m_j} D'_j$ then a map $\Sigma$ is a $M\times N$ matrix $(D_{ij})$ the entries $D_{ij}=\sum_{k} a_k \Sigma_{ij}^k$ of which are $\ZZ$-linear combinations of orientable surfaces $\Sigma_{ij}^k \subset I^3$
 which bound the $1$-manifolds $D_i$ and $D_j'$ in the sense that the boundary $\partial \Sigma_{ij}^k$ decomposes as a union of $D_i$, $D_j'$, $S\times \I$ and $S'\times \I$ along the occupied faces of the cube ($\Sigma_{ij}^k\cap (\{0,1\}\times I^2) = \emptyset$).
\end{enumerate}
Manifolds are considered up to isotopy and maps are composed along various axes by gluing and rescaling. We briefly review the setup in order to orient the reader and establish notation.

There is a product $\sqcup : \PC\times \PC \to \PC$ given by the disjoint union of cobordisms along the first axis of $\I^3$. The unit with respect to the disjoint union is the empty set $\emptyset$.

For any two sets of points $S, S'$, there is a category $\Hom(S,S')$ with objects given by $1$-morphisms $D : S \to S'$ as in $(2)$ above and maps given by $2$-morphisms in the sense of $(3)$ above.
The composition in $\PC$ gives functors 
$$\ott : \Hom(S,S') \times \Hom(S',S'') \to \Hom(S,S'').$$
In particular, if $D : S\to S'$ and $E : S'\to S''$ are $1$-manifolds then the composite $D\ott E : S\to S''$ is illustrated below.
\begin{equation}\label{eq-stacking}
\begin{tikzpicture}[scale=1]
\node (Top) at (0,1) {};
\node (TL) [left=1cm of Top] {};
\node (TR) [right=1cm of Top] {};
\node (TRR) [right=-.3cm of TR] {$S$};
\draw[thick, dashed,gray] (TL) -- (TR);
\node (TLL) [left=-.3cm of TL] {$1$};

\node (D) [below=.15cm of Top] {$D$};
\draw[thick] ($(D.north west)+(-\eps,0)$)  rectangle ($(D.south east)+(\eps,0)$);

\node (DE) [below=.02cm of D] {};
\node (DEL) [left=1cm of DE] {};
\node (DER) [right=1cm of DE] {};
\node (DERR) [right=-.3cm of DER] {$S'$};
\draw[thick, dashed,gray] (DEL) -- (DER);
\node (DELL) [left=-.3cm of DEL] {$\frac{1}{2}$};

\node (E) [below=.3cm of D] {$E$};
\draw[thick] ($(E.north west)+(-\eps,0)$)  rectangle ($(E.south east)+(\eps,0)$);

\node (Bot) [below=.15cm of E] {};
\node (BL) [left=1cm of Bot] {};
\node (BR) [right=1cm of Bot] {};
\node (BRR) [right=-.3cm of BR] {$S''$};
\draw[thick, dashed,gray] (BL) -- (BR);
\node (BLL) [left=-.3cm of BL] {$0$};

\draw[thick,->] (TRR)to node {$D$} (DERR);
\draw[thick,->] (DERR)to node {$E$} (BRR);

\draw [thick] (Top.center) --  (D);
\draw [thick] (D.south) --  (E.north);
\draw [thick] (E.south) --  (Bot.center);
\end{tikzpicture}
  \end{equation}
When $S=S'$, the identity $1$-manifold is $1_S := S\times I$.

If $\Sigma : D\to D'$ is a surface with $D,D' : S\to S'$ then the expression 
\[\chi(\Sigma) - (S+S')/2\in\ZZ
\]
is preserved by gluing. This determines the $q$-grading: if $\Sigma : q^n D\to q^m D'$ is a map between formally $q$-graded $1$-manifolds then we set
$$\deg(\Sigma) := \chi(\Sigma) - (S+S')/2 + m-n.$$
In this way, every $2$-morphism is a sum of its $q$-homogeneous components. 

We next review a duality lemma which is important in \S\ref{sec-extension}.
For more information see \cite[\S 5.1]{MR3426689} or compare \cite[\S 3.1]{MR2496052}.

\newcommand{\dthree}[1]{#1^\vee}

If $E : S\to S$ is an endomorphism then the trace $\Tr(E)$ is the closed $1$-manifold given by the quotient of $E$ which pairwise identifies top $S$-points with the bottom $S$-points:  $(x_i,0) \sim (x_i,1)$, when $S=\{x_1<x_2<\cdots <x_n\}$.
The proposition below describes how $2$-morphisms can be written as traces.

\begin{prop}\label{prop-dual}
  If $E, F :  S\to S'$ are $1$-manifold morphisms  then there is a dual $\dthree{E} : S'\to S$ and a natural isomorphism
\begin{equation}\label{eq-d3}
    \Hom(E,F) \cong q^{-(S+S')/2} \Hom(\emptyset, \Tr(F\ott \dthree{E})).
\end{equation}
The duality functor satisfies $\dthree{(\dthree{E})}\cong E$ and
$$  \dthree{(E\opp F)} \cong \dthree{E}\opp \dthree{F},\quad \dthree{(E\ott F)} \cong \dthree{F} \ott \dthree{E},\quad   \dthree{1_S} = 1_S$$
and reverses the formal $q$-grading.
  \end{prop}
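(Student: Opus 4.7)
The plan is to construct the dual $\dthree{E}$ geometrically and then realize the isomorphism in \eqref{eq-d3} via a "bending" operation on cobordisms, with degrees tracked through Euler characteristic.

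First I would define $\dthree{E}$ by reflecting $E$ through a horizontal axis in $I^2$, so that the top endpoints $S$ of $E$ become the bottom endpoints of $\dthree{E}$ and vice versa; on formal direct sums I extend by $\dthree{(\bigoplus q^{n_i} E_i)} := \bigoplus q^{-n_i} \dthree{E_i}$, which is the source of the $q$-grading reversal. With this definition the listed identities $\dthree{\dthree{E}}\cong E$, $\dthree{(E\opp F)}\cong\dthree{E}\opp\dthree{F}$, $\dthree{(E\ott F)}\cong\dthree{F}\ott\dthree{E}$, and $\dthree{1_S}=1_S$ are immediate: the first is two reflections, the second distributes over disjoint union, the third follows because vertical composition is order-reversed by the horizontal reflection, and the fourth because $1_S=S\times I$ is preserved by the reflection.

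Next I would construct the isomorphism \eqref{eq-d3} by a canonical geometric move. Any $2$-morphism $\Sigma:E\to F$ lives in $I^3$ with boundary decomposing as $E$, $F$, and two vertical strips $S\times I$, $S'\times I$. Think of the horizontal direction as time, and rotate $\dthree{E}$ to stack below $F$, producing $F\ott \dthree{E}:S\to S$; the two vertical strips $S\times I$ and $S'\times I$ on $\partial\Sigma$ are precisely the identity cobordisms needed to glue the top $S$-endpoints to the bottom $S$-endpoints, realizing $\Tr(F\ott \dthree{E})$. Concretely, one bends the sides of the cube $I^3$ around and glues the strips into collaring cylinders; this produces a closed-top cobordism $\hat\Sigma:\emptyset\to \Tr(F\ott \dthree{E})$. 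The inverse operation cuts along the canonical meridional arcs in the trace to recover a cobordism $E\to F$. That these are mutually inverse, and natural in $E$ and $F$, follows from the standard geometric snake identities applied to the strip pieces; naturality with respect to pre- and post-composition by cobordisms reduces to the fact that gluing commutes with bending, which is immediate from the local model.

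For the grading, the bending operation identifies the strip collars with the identity cylinders in $\Tr(F\ott\dthree{E})$ and so adds no handles, yielding $\chi(\hat\Sigma)=\chi(\Sigma)$. Hence
$$\deg(\hat\Sigma:\emptyset\to \Tr(F\ott\dthree{E})) = \chi(\hat\Sigma) = \chi(\Sigma) = \deg(\Sigma:E\to F) + \tfrac{S+S'}{2},$$
which is exactly the shift recorded by the factor $q^{-(S+S')/2}$ on the right-hand side of \eqref{eq-d3}; the contribution from the formal $q$-grading on direct summands matches because $\dthree{}$ reverses internal $q$-shifts.

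I expect the main obstacle to be pinning down the bending operation precisely enough that the isomorphism is manifestly natural rather than merely a bijection of sets of cobordism classes. The issue is that the move requires a choice of how to re-embed the bent picture into $I^3$, and one needs to check that different choices differ by ambient isotopy so that the map is well-defined on isotopy classes and strictly compatible with the 2-categorical composition $\ott$ in $\PC$. Once the local bending model is fixed and shown to commute with gluing along both $S$ and $S'$, naturality, invertibility, and the functoriality identities for $\dthree{}$ all follow by routine planar verification.
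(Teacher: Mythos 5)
Your proposal is correct and follows essentially the same route as the paper, which defines $\dthree{E}$ as the reflection of $E$ and obtains \eqref{eq-d3} by pulling the face occupied by $E$ around the boundary of the box onto the face occupied by $F$, i.e.\ exactly your bending construction. Your Euler-characteristic bookkeeping for the $q^{-(S+S')/2}$ shift and the reversal of formal $q$-shifts also matches the paper's degree convention $\deg(\Sigma)=\chi(\Sigma)-(S+S')/2+m-n$.
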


The idea is that there is an isotopy of any surface 
$\Sigma : E\to F$ which pulls $E$ along
the boundary of the box to the
face occupied by $F$. This produces a closed
$1$-manifold consisting of $F$
composed with a reflection
$\dthree{E}$ of $E$. Here is a
picture of the reflection.
\begin{equation}\label{eq-reflections}
\begin{tikzpicture}
  \node (A) {$\dthree{Q}$};
  \node (B) [right=1cm of A] {$\raisebox{\depth}{\scalebox{-1}[-1]{Q}}$};
\draw[thick] ($(A.north west)+(0,0)$)  rectangle ($(A.south east)+(0,0)$);
\draw[thick] ($(B.north west)+(0,0)$)  rectangle ($(B.south east)+(0,0)$);
\draw[->,thick] ($(A.east)+(\heps,0)$) to node {$\sim$} ($(B.west)+(-\heps,0)$);
  \end{tikzpicture}
\end{equation}

If $E$ and $F$ are $1$-manifolds then $\Tr(F\ott \dthree{E})$ is a collection of circles, so it is natural to apply a Frobenius algebra or $2$-dimensional TQFT to the right hand side of Eqn.~\eqref{eq-d3}. We use this observation to describe the Khovanov homology 2-category $\PC_V$ in the next section.

\subsection{Khovanov Homology}\label{ssec-khovanov}

Bar-Natan and Khovanov $2$-category $\PC_V$ is obtained from the 2-category $\PC$ by using Eqn. \eqref{eq-d3} to require that
\begin{equation}\label{eq-localize}
  \Hom(E,F) \cong q^{-(S+S')/2} V^{\otimes \#\pi_0(F\ott \dthree{E})}.
\end{equation}  
 The circle diagram $\mathbb{S}^1$ is isomorphic to $V:=q H^*(\mathbb{S}^2)$, where 
$H^*(\mathbb{S}^2) :=  \mathbb{Z}[X]/(X^2)$
is the Frobenius algebra\footnote{Notice that $H^*(\mathbb{S}^2)$ is a graded Frobenius algebra, and $V$ is a free rank $1$ graded module over it, with $\vnp{1}_q=1\neq 0$.} graded by the degree assignments $\vnp{1}_q:=0$ and $\vnp{X}_q:=-2$. The coproduct is $\Delta(1) := X\ott 1 + 1\ott X$ and $\Delta(X) := X\ott X$. The counit is $\epsilon(X):=1$ and $\epsilon(1):=0$.

By construction the duality map descends to $\PC_V$ and the duality isomorphism \eqref{eq-d3} continues to hold.
In $\PC_V$, there are {\em delooping} isomorphisms
$$D\sqcup \mathbb{S}^1 \cong q D\opp q^{-1} D \conj{ for any 1-manifold } D\in \PC.$$

To each tangle diagram $\tau$, one can assign a chain complex $T_\tau\in \Ch(\PC_V)$
using the rule pictured below. 
For each positively
or negatively oriented crossing $\s^\pm$, the chain complex $T_{\s^\pm}$ is defined to be the
complex $\C^\pm$, which is the cone of the saddle cobordism between two
resolutions pictured below.\footnote{We abuse the notation here and omit an extra decoration on the left-hand side of these equations that would indicate the chain complex associated with a crossing.}
\begin{equation}\label{eq-crossing}
\begin{tikzpicture}[scale=0.6]
\node (W) at (0.5,.5) {\usebox{\posC}};
\node (Wt) [right=\hbbeps cm of W] {$:=$};
\node (X) [right=1.3cm of W] {\usebox{\idTLtwo}};
\draw[thick] ($(X.south west)+(\heps,0)$) -- ($(X.south east)+(-\heps,0)$);
\node at ($(X.west)+(-\eps,0)$) {$q^1$};
\node (Y) [right=\bbeps cm of X] {\usebox{\eTLtwo}};
\node at ($(Y.west)+(-\eps,0)$) {$q^2$};
\draw[->,thick]($(X.east)+(\heps,0)$) to node {} ($(Y.west)+(-\beps,0)$);
\draw[thick,dotted] ($(X.north west)+(-\bneps,\neps)$)  rectangle ($(Y.south east)+(\neps,-\neps)$);

\node (Wn) [right=4*\bbeps cm of W] {\usebox{\negC}};
\node (Wt) [right=\hbbeps cm of Wn] {$:=$};
\node (Xn) [right=1.3 cm of Wn] {\usebox{\eTLtwo}};
\node at ($(Xn.west)+(-\heps,0)$) {$q^{-2}$};
\node (Yn) [right=\bbeps cm of Xn] {\usebox{\idTLtwo}};
\node at ($(Yn.west)+(-\heps,0)$) {$q^{-1}$};
\draw[->,thick]($(Xn.east)+(\heps,0)$) to node {} ($(Yn.west)+(-\bneps,0)$);
\draw[thick] ($(Yn.south west)+(\heps,0)$) -- ($(Yn.south east)+(-\heps,0)$);
\draw[thick,dotted] ($(Xn.north west)+(-\bneps,\neps)$)  rectangle ($(Yn.south east)+(\neps,-\neps)$);
\end{tikzpicture}
  \end{equation}
The underlined $1$-morphisms are placed in homological degree zero. The duality map $\dthree{C}$ extends to chain complexes by reversing the homological $t$-degree. 
Notice that $\dthree{(C^\pm)}\cong C^\mp$. In particular, the dual $\dthree{-}$ on $1$-manifolds determines an operation on tangles 
which satisfies 
$$T_{\dthree{\tau}} \cong \dthree{T_\tau}.$$

When two tangle diagrams differ by an oriented Reidemeister move there is a degree zero chain homotopy equivalence between the associated chain complexes. In this way there is an assignment $\kappa(\tau):=[T_\tau]$ of oriented tangles to elements of the homotopy category $K^{\mathsf{b}}(\PC_V)$ of bounded chain complexes.  This assignment extends to a $2$-functor $\kappa : \Tang^4_b \to K^{\mathsf{b}}(\PC_V)$ up to sign. The 2-category $K^{\mathsf{b}}(\PC_V)/\{\pm 1\}$ is denoted by $\widehat{\mathbb{K}}$ in \cite{MR2171235}.

If $(E,d_E)$ and $(F,d_F)$ are two chain complexes in $\Ch(\PC_V)$ then there is a chain complex $(\Hom^*(E,F),\delta)$ of maps from $E$ to $F$. In degree $\ell$ an element of this chain complex is a sequence of maps $\{ f_i : E^i \to F^{i+\ell} \}_{i\in\ZZ}$. The differential is $\d f = \{ d_F f_i + (-1)^{\ell + 1} f_{i+1} d_E \}_{i\in \ZZ}$.

Eqn. \eqref{eq-d3} extends degreewise along the $\Hom^*$-construction with the functor $-^\vee$. In particular, when $E=T_\a$ and $F=T_\b$ are the chain complexes associated to tangles $\a$ and $\b$ and, by using Eqn. \eqref{eq-localize}, we obtain
\begin{align*}
  \Hom^*(T_\a,T_\b) & \cong q^{-(S+S')/2} \Hom^*(\emptyset, \Tr(T_\b\ott \dthree{T_{\a}}))\\%& \eqref{eq-d3}\\
                    &\cong q^{-(S+S')/2} \CKh(\b\dthree{\a}).%&
  \end{align*}
Here $\CKh(\b\dthree{\a})$ is the standard hypercube-shaped chain complex that appears in the original construction of Khovanov homology.
So we have the following lemma.

\begin{lemma}\label{lemma-d4}
For two oriented tangles $\a, \b : S\to S'$ from $S$ points to $S'$ points, there is a isomorphism on homology,
\begin{equation}\label{eq-d4}
    H(\Hom^*(T_\a,T_\b),\delta) \cong q^{-(S+S')/2} \Kh(\b \dthree{\a}).
\end{equation}
\end{lemma}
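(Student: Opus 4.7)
The plan is to extend Proposition \ref{prop-dual} and Eqn. \eqref{eq-localize} degreewise through the chain-complex construction, obtaining a chain-level isomorphism whose homology is $\Kh(\b \dthree{\a})$.

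First, I would unpack the two chain complexes. Writing $T_\a = \bigoplus_{i} T_\a^i[i]$ and $T_\b = \bigoplus_j T_\b^j[j]$, each term $T_\a^i$ and $T_\b^j$ is a formal $q$-graded direct sum of resolution $1$-manifolds between $S$ and $S'$ coming from the cube of resolutions of Eqn. \eqref{eq-crossing}; the differentials are assembled from the saddle cobordisms with the standard Khovanov signs. The duality functor $\dthree{(-)}$ extends to chain complexes by reversing the homological $t$-degree (and reversing the $q$-degree), and one checks directly from the definition of $\C^{\pm}$ that $\dthree{T_\a}$ is again a chain complex associated to the tangle $\dthree{\a}$.

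Next, I apply Proposition \ref{prop-dual} in each bidegree. For any $1$-manifold summands $E$ of $T_\a^i$ and $F$ of $T_\b^j$ the proposition gives
$$\Hom(E,F) \cong q^{-(S+S')/2}\, \Hom(\emptyset,\ \Tr(F \ott \dthree{E})).$$
Assembling over all $i,j$ yields an isomorphism of bigraded abelian groups
$$\Hom^*(T_\a, T_\b) \cong q^{-(S+S')/2}\, \Hom^*(\emptyset,\ \Tr(T_\b \ott \dthree{T_\a})).$$
I would then check that this isomorphism is compatible with differentials. The $\Hom^*$-differential $\delta f = d_F f + (-1)^{\ell + 1} f d_E$ on the left corresponds, under the duality bijection, to the total differential of $\Tr(T_\b \ott \dthree{T_\a})$ built from $d_{T_\b}$ and the sign-reversed differential on $\dthree{T_\a}$; this is exactly the standard Koszul sign convention on a tensor product of chain complexes. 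Naturality in Proposition \ref{prop-dual} (applied to each saddle $2$-morphism of $T_\a$ and $T_\b$) is what ensures the two descriptions of the differential agree.

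Finally, I invoke the local identification Eqn. \eqref{eq-localize}: for any closed $1$-manifold diagram $D$ we have $\Hom(\emptyset, D) \cong V^{\ott \#\pi_0(D)}$, and each saddle edge of the cube of resolutions in $\Tr(T_\b \ott \dthree{T_\a})$ acts as the multiplication or comultiplication of the Frobenius algebra $V$. This is exactly the definition of the Khovanov chain complex $\CKh(\b \dthree{\a})$, so on homology we obtain
$$H(\Hom^*(T_\a, T_\b), \delta) \cong q^{-(S+S')/2}\, \Kh(\b \dthree{\a})$$
as claimed. The main obstacle is the sign bookkeeping in the middle step: one must confirm that the Koszul signs coming from $\delta$, the reversal of homological degree under $\dthree{(-)}$, and the standard hypercube signs used in $\CKh(\b \dthree{\a})$ all line up. Apart from this, each ingredient is either explicit in Proposition \ref{prop-dual} and Eqn. \eqref{eq-localize} or is a routine consequence of the Bar-Natan--Khovanov formalism.
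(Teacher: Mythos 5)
Your proposal is correct and follows essentially the same route as the paper: the text preceding the lemma extends the duality isomorphism of Proposition \ref{prop-dual} degreewise along the $\Hom^*$-construction, applies Eqn.~\eqref{eq-localize} to identify the result with the hypercube complex $\CKh(\b\dthree{\a})$, and passes to homology. Your additional attention to the Koszul sign bookkeeping is a reasonable elaboration of what the paper leaves implicit, but it is not a different argument.
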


The lemma above will be used in \S\ref{sec-extension} to study movie moves.

We conclude with what, for us, is an important example: computing the mapping space from one crossing $\C^\mp$ to the opposite crossing $\C^\pm$ results in a Hopf link.

\begin{example}\label{ex-dual}
Notice that $(\C^\pm)^\vee \cong \C^\mp$.   
If $H_\pm$ is the Khovanov homology of the Hopf link with linking number $\pm 1$ then
$$H(Hom^*(\C^\mp,\C^\pm)) \cong q^{-2} \Tr(\C^\pm\ott \dthree{(\C^{\mp})}) = q^{-2} H_\pm,$$
where
\begin{align}
H_+ &\cong \ZZ_{0,0}\opp \ZZ_{0,2}\opp \ZZ_{2,4} \opp \ZZ_{2,6}\\
H_- & \cong \ZZ_{0,0}\opp \ZZ_{0,-2}\opp \ZZ_{-2,-4}\opp \ZZ_{-2,-6}.
\end{align}
Here the symbol $\ZZ_{a,b}$ represents a $\ZZ$-summand in $(t,q)$-degree $(a,b)$.
See Fig. \ref{fig:hopflinks} below for an illustration of the homologies $H_\pm$.
\end{example}

\begin{rmk}\label{deformedhopf}
   Instead of $H^*(\mathbb{S}^2)$ in Eqn. \eqref{eq-localize} we could use the equivariant homology 
    $$H^*_{U(2)}(\mathbb{S}^2) := H^*_{U(2)}(pt)[X]/(X^2-hX-t) \conj{ where } H^*_{U(2)}(pt) := \ZZ[h,t]$$
and we follow the grading convention $\vnp{1}_q=0$ and $\vnp{X}_q=-2$, so that $\vnp{h}_q=-2$ and $\vnp{t}_q=-4$ make the quotient relation $q$-homogeneous.
    For this Frobenius algebra, the counit is determined by setting $\epsilon(1):=0$ and $\epsilon(X):=1$. The coproduct is given by $\Delta(1) :=1\ott X + X\ott 1 + h 1\ott 1$ and $\Delta(X) := X\ott X + t 1\ott 1$ \cite[Eqn. (5)]{KhovanovFrobenius}. And the unit is $\iota(1):=1$. 

    Let $R := \mathbb{Z}[h,t]$ with $\vnp{h}_q=-2$ and $\vnp{t}_q=-4$.
    Repeating \S\ref{ssec-khovanov} we assign the $R$-module $A := qH^*_{U(2)}(\mathbb{S}^2)$ to the circle $\mathbb{S}^1$ and produce a $2$-category $\PC_A$ in which the duality theorem continues to hold. Moreover, since $X^2=hX+t$, the $2$-category $\PC_V$ is obtained from $\PC_A$ by setting $h=0$ and $t=0$. If the same assignments as Eqn. \eqref{eq-crossing} are used for positive and negative crossings then there is a tangle $2$-functor $\kappa' : \Tang^4_b \to K^{\mathsf{b}}(\PC_A)/\{\pm 1\}$, see \cite{MR2174270} and \cite[Prop. 6]{KhovanovFrobenius}.
    
    Now there are isomorphisms
\begin{equation}\label{eqivdeloop}
A \mathop{\mathrel{\rightleftarrows}}^{\alpha}_{\beta} qR \opp q^{-1}R
\end{equation}
$$\conj{ where }
\alpha(z) := \left(\begin{array}{cc}\epsilon(Xz)& \epsilon(z)\end{array}\right)^T\hspace{0.25em}\textnormal{ and }\hspace{0.25em} \beta(z) := \left(\begin{array}{cc}\iota(z) & \iota(Xz)-h\iota(z)\end{array}\right).$$
    The relations $\alpha\beta=1$ and $\beta\alpha=1$ imply corresponding delooping isomorphisms among the associated surfaces in the $2$-category $\PC_A$.  By applying these delooping isomorphisms and removing acyclic subcomplexes as in \cite{FastKhovanov}, since $\mathbb{S}^1$ is identified with $A$, one computes the {\em equivariant Hopf link homologies} as
\begin{align*}
        H_+^{equiv} &= t^0q^1 A \oplus t^2q^5 A\\
        H_-^{equiv} &= t^{-2}q^{-5}A \oplus t^0 q^{-1}A.
\end{align*}
    In particular, Eqn. \eqref{eqivdeloop} shows that there is a non-canonical isomorphism of the form
    $H_\pm^{equiv} \cong H_\pm \ott_\ZZ R$.  See also Remark \ref{univfamilyrmk}.
\end{rmk}

\section{Extension of Khovanov homology to double points}\label{sec-extension}

\subsection{Chain maps associated with double points}
In order to assign a map to a singular cobordism between oriented links it
suffices to assign maps to Morse singularities, Reidemeister moves and the
double points corresponding to the passage from a positive crossing to a
negative crossing and vice versa, see Eqn. \eqref{eq-doublepointmovie}. 

\begin{definition} \label{def: AB}
For the Morse singularities and
Reidemeister moves we will use the same assignments as the existing theory
and the maps $\Amap : \C^+\to \C^-$ and $\Bmap : \C^- \to \C^+$ which are pictured in Fig. \ref{fig: mapsAB} for the double
points. 
On the right-hand side, the $B$ map consists of the identity map between the two components of the crossing complexes which are not in degree zero. It has homological degree $2$ (also called the \emph{$t$-degree}). On the left-hand side, the $A$ map is the alternating sum of dots on the two sheets corresponding to components of the crossing complexes which are not in degree zero. A dot can be understood as half of a handle, see \cite[\S 2.3]{MR2901969}. The map $A$ has homological degree $-2$.
\end{definition}
\begin{figure}[H]
    \centering
\includegraphics[width=14cm]{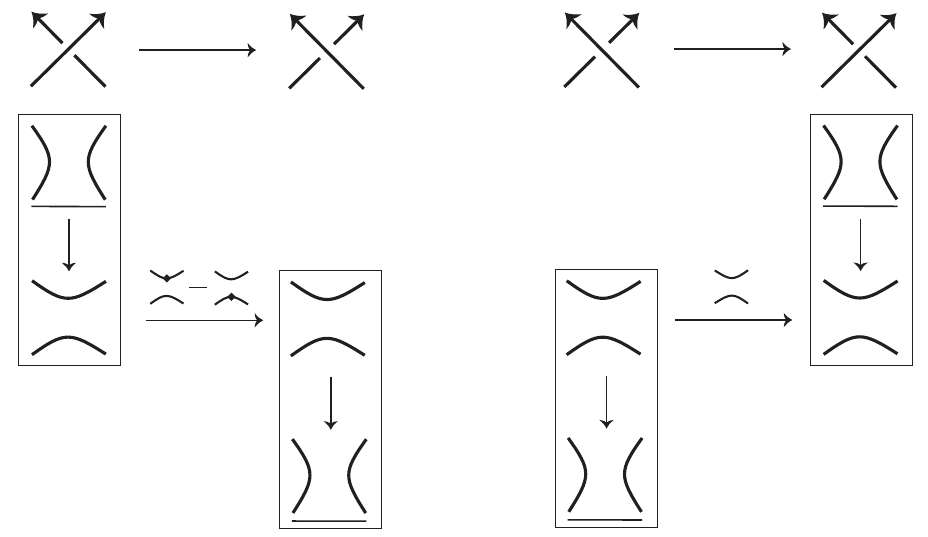}
\put(-322,215){$A$}
\put(-92,215){$B$}
\caption{The definition of the chain maps $A$, $B$ assigned to double points} \label{fig: mapsAB}
\end{figure}

 It is straightforward to check the proposition below.

\begin{prop}\label{prop-abhopf}
  The assignments $\Amap$ and $\Bmap$ are chain maps which represent homology classes in the chain complexes $\Hom^*(\C^+,\C^-)$ and $\Hom^*(\C^-, \C^+)$ corresponding to the classes in $(t,q)$-degree $\vnp{\Amap}_{t,q}=(-2,-6)$ 
  and $\vnp{\Bmap}_{t,q}=(2,4)$ of the ($q$-shifted) Hopf link homologies $q^{-2}H_-$ and $q^{-2}H_+$ respectively in Ex. \ref{ex-dual}.
\end{prop}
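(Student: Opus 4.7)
My plan has three steps: verify that $A$ and $B$ are chain maps, compute their bigradings, and identify the homology classes they represent using Lemma~\ref{lemma-d4} and Example~\ref{ex-dual}.

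The chain-map verification is a local calculation in $\PC_V$. The map $B$ is supported only on the pair of summands $(\C^-)^{-1}=q^{-2}e$ and $(\C^+)^{+1}=q^{2}e$, and the differentials of $\C^\pm$ are single saddles landing in or emanating from the identity resolution in homological degree $0$, where $B$ vanishes; both sides of $d_{\C^+}\circ B = B\circ d_{\C^-}$ therefore vanish for support reasons. For $A$ the same support argument reduces the chain-map check to a single nontrivial square relating $q^{2}e$ and $q^{-2}e$, which commutes by the local Bar-Natan/Frobenius identity expressing that sliding a dot across a saddle realizes $\Delta(1)=1\otimes X + X\otimes 1$: the two dots in the alternating sum defining $A$ cancel on either side of the square.

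The bigrading is a direct application of $\deg(\Sigma)=\chi(\Sigma)-(S+S')/2 + m - n$: each dot contributes $q$-degree $-2$, while the source/target $q$-shifts and the homological jumps between degrees $\pm 1$ and $\mp 1$ yield $\vnp{A}_{t,q}=(-2,-6)$ and $\vnp{B}_{t,q}=(2,4)$. Invoking Lemma~\ref{lemma-d4} and Example~\ref{ex-dual} gives $H(\Hom^*(\C^+,\C^-))\cong q^{-2}H_-$ and $H(\Hom^*(\C^-,\C^+))\cong q^{-2}H_+$. Undoing the $q^{-2}$-shift places the cycle $A$ in bidegree $(-2,-4)$ of $H_-$ and the cycle $B$ in bidegree $(2,6)$ of $H_+$; these are precisely the extremal summands $\ZZ_{-2,-4}\subset H_-$ and $\ZZ_{2,6}\subset H_+$ appearing in Fig.~\ref{fig:hopflinks}.

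The main obstacle will be verifying non-triviality of the two cycle classes. I expect this to fall out of unfolding the duality isomorphism of Proposition~\ref{prop-dual} at the chain level and simplifying the resulting closed complex by delooping and Gaussian elimination~\cite{FastKhovanov}: after reduction, the chain groups at these bidegrees have rank one with no incoming or outgoing differentials, so any nonzero cycle automatically represents a generator. The sign of that generator is determined only up to $\pm 1$, which is consistent with the overall-sign ambiguity in the main theorem.
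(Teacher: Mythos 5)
The paper itself offers no proof of this proposition (it is prefaced only by ``It is straightforward to check''), so your proposal is really supplying the argument the authors omit, and your route --- support/degree bookkeeping for the chain-map condition, then Lemma~\ref{lemma-d4} and Example~\ref{ex-dual} to locate the classes --- is the natural one and is essentially what the later proofs of MM16 and MM17 presuppose. The chain-map check is right: $\Bmap$ satisfies $d_{\C^+}\Bmap=\Bmap\, d_{\C^-}=0$ for support reasons, and for $\Amap$ the two remaining composites (saddle after the dotted maps, and the dotted maps after the saddle) vanish because a dot slides across the connecting saddle, so the two terms of the alternating sum cancel; note this is just dot migration on a connected cobordism, not really the coproduct identity $\Delta(1)=1\ott X+X\ott 1$. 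The degree computations $(2,4)$ and $(-2,-6)$ and the translation to bidegrees $(2,6)$ in $H_+$ and $(-2,-4)$ in $H_-$ are correct.

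The one genuine gap is the final non-triviality step. ``Any nonzero cycle automatically represents a generator'' of a rank-one homology group is false (a nonzero multiple of a generator is a nonzero cycle), and moreover at the bidegree $(-2,-4)$ of $H_-$ the unreduced chain group has rank two with a nonzero outgoing differential, so the ``no incoming or outgoing differentials'' picture only holds after a reduction whose effect on the image of $\Amap$ you would still have to track. Since generator-ness is precisely what Lemma~\ref{lem-movie} consumes in the MM16/MM17 arguments, this is the substantive content of the proposition and should be verified directly: unfolding the duality isomorphism of Prop.~\ref{prop-dual} at the chain level, the identity cobordism defining $\Bmap$ bends to a pair of undotted cups over the all-one resolution of the positive Hopf link diagram, i.e.\ to $1\ott 1\in V^{\ott 2}$, which is the generator of the $\ZZ$ in top $q$-degree of that (rank-one, differential-free) spot; and $\Amap$ bends to $X\ott 1-1\ott X$ in the all-zero resolution of the negative Hopf link diagram, which is exactly a generator of the kernel of the pair of multiplication maps, hence of $\ZZ_{-2,-4}\subset H_-$. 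With that explicit identification in place of the rank-one heuristic, your proof is complete.
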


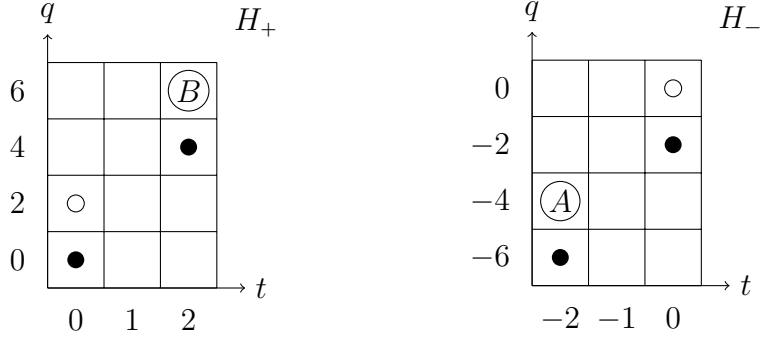
\begin{figure}
\begin{tabular}{ccc}
\begin{tikzpicture}[scale=.75]
\draw (3.7,4.7) node {$H_+$};
  \draw[->] (0,0) -- (3.5,0) node[right] {$t$};
  \draw[->] (0,0) -- (0,4.5) node[above] {$q$};
  \draw[step=1] (0,0) grid (3,4);
  \draw (0.5,-.2) node[below] {$0$};
  \draw (1.5,-.2) node[below] {$1$};
  \draw (2.5,-.2) node[below] {$2$};

  \draw (-.2,0.5) node[left] {$0$};
  \draw (-.2,1.5) node[left] {$2$};
  \draw (-.2,2.5) node[left] {$4$};
  \draw (-.2,3.5) node[left] {$6$};

\fill (0.5, 0.5) circle (.15);
  \draw (0.5, 1.5) circle (.15);

  \node[draw, circle,minimum size=.15cm, inner sep=1pt] at (2.5,3.5) {$B$};
  \fill (2.5, 2.5) circle (.15);
\end{tikzpicture}

& \hspace{.618in} &
\begin{tikzpicture}[scale=.75]
\draw (3.7,4.7) node {$H_-$};
  \draw[->] (0,0) -- (3.5,0) node[right] {$t$};
  \draw[->] (0,0) -- (0,4.5) node[above] {$q$};
  \draw[step=1] (0,0) grid (3,4);
  \draw (0.5,-.2) node[below] {$-2$};
  \draw (1.5,-.2) node[below] {$-1$};
  \draw (2.5,-.2) node[below] {$0$};

  \draw (-.2,0.5) node[left] {$-6$};
  \draw (-.2,1.5) node[left] {$-4$};
  \draw (-.2,2.5) node[left] {$-2$};
  \draw (-.2,3.5) node[left] {$0$};

\fill (0.5, 0.5) circle (.15);
 % \draw (0.5, 1.5) circle (.25);
  \node[draw, circle,minimum size=.15cm, inner sep=1pt] at (.5,1.5) {$A$};

  \fill (2.5, 2.5) circle (.15);
  \draw (2.5, 3.5) circle (.15);

\end{tikzpicture}
\end{tabular}
\caption{The grid shows the homology of the Hopf links $H_\pm$ from Ex. \ref{ex-dual}. 
As in Prop. \ref{prop-abhopf} the generators labelled $A$ and $B$ correspond to the maps $A$ and $B$ under the duality isomorphism. A hollow circle is a $\ZZ$-summand which is not in the image of the operation $X$ (determined by the Frobenius algebra). A filled circle is in the image of $X$. Multiplication by $X$ at a component of a link has $q$-degree $-2$. }
\label{fig:hopflinks}
\end{figure}

\begin{rmk}\label{rmk:geohopf}
    The surface $\Sigma$ produced by the Seifert algorithm from the 2-crossing planar projection of the Hopf link consists of two disks connected by two half-twisted $1$-handles. Since this surface has Euler characteristic zero, $\Sigma$ determines maps $\Sigma_* : \ZZ \to H_\pm$ of $(t,q)$-degree $(0,0)$. The image of this map is spanned by a generator of $\ZZ_{0,0}$ in $H_-$ and twice a generator of $\ZZ_{0,0}$ in $H_+$. 
 
The results in this paper imply that the class $\ZZ_{-2,-4}$ in $H_-$ corresponds to the image of the singular cobordism which is dual to the $A$ map. Decorating cobordisms with dots corresponds to the action of $X$ in Fig. \ref{fig:hopflinks}. So, abusing notation slightly,  the entire homology 
\begin{equation}\label{eq:hopfgeo}
H_- \cong \inp{\Sigma, A}\opp X\inp{\Sigma,A}
\end{equation}
can be understood in terms of cobordisms. Notice that $X\Sigma$ is a generator of the negative Hopf link homology $H_-$, but this cannot be true for the positive Hopf link $H_+$ because the $q$-degree satisfies $\vnp{X\Sigma_*(1)}_q = -2$ and the homology $H_+$ vanishes in negative $q$-degree, it follows that $X\Sigma=0$ in $H_+$. 
\end{rmk}

\begin{rmk}
Determining which classes in the Khovanov homology of a link can be represented geometrically in the manner of Eqn. \eqref{eq:hopfgeo} is the same as understanding the question of how much of Khovanov homology is {\em cobordism generated} in the sense of topological quantum field theory. This was the original motivation for the authors. Functoriality of Khovanov homology implies that for a smooth oriented surface $(\Sigma,\partial \Sigma)\subset (\R^4, \R^3)$ bounding a link $\partial \Sigma$, there is a map 
   $$\Sigma_* : \ZZ \to \Kh^{0,\chi(\Sigma)}(\partial \Sigma),$$ see \cite[\S 6.3]{KhovanovJones}. 
   Since the homological $t$-degree of the right-hand side must be zero and, at least phenomenologically, Khovanov homology is concentrated along a diagonal in the $(t,q)$-plane \cite{KhovanovPatterns}, one should expect very little of the homology to be geometric. The extension $\Tang^4_b\subset \Tang^4_{\times,b}$ considered in this paper can be considered the simplest non-trivial extension of the theory along homology classes which are {\em not} represented geometrically. In our extension, it is no longer necessary for geometric homology classes to have $t$-degree $0$. It seems interesting to ask, which classes in $\Kh^{i,j}(L)$ are now geometric? Answers to this question may help to shed light on the nature of Khovanov homology as a tool for the study of low-dimensional topology. 
\end{rmk}

\begin{rmk}
In the skein lasagna module theory~\cite{MWW} 
one considers a disjoint collection of input 4-balls $\mathbb{D}^4$ in a 4-manifold $M$ and surfaces $\Sigma\subset M$ with boundary in $\partial \mathbb{D}^4$. Additionally, these skein lasagna fillings are decorated with Khovanov-Rozansky homology classes of the links $\partial\Sigma$ in the 3-spheres $\partial \mathbb{D}^4$. The results of this paper may be interpreted as considering input balls with Hopf links $H_{\pm}$ in their boundary, decorated with  generating classes in cohomological degrees $\pm 2$. See also \cite[Example 3.7]{MWW1}.

\end{rmk}

\subsection{Related work in the literature} \label{sec: related} Here we discuss work of other authors related to the results in our paper.

\begin{enumerate}
\item
L. Weng introduced assignments for framed singular cobordisms \cite{MR2995926}. 
Interpreting these assignments in the oriented setting, the map $c_1 : \C^+\to \C^-$ in~\cite{MR2995926} has $t$-degree zero, sending the 0-resolution of $\C^+$ isomorphically onto the 1-resolution of $\C^-$, and sending the 1-resolution to $0$. The map $c_2 : \C^-\to \C^+$ in~\cite{MR2995926} is the map $\Bmap$ above.

\item \label{rmk_ISST} While preparing this paper for publication, we learned from T.~Sano about a related extension of tangle cobordism invariants to double point singularities, by H.~Imori, T.~Sano, K.~Sato, and M.~Taniguchi~\cite{ISST}.

\item \label{rmk_RSWWZ} 
After posting the first version of this paper, we learned from Q. Ren that a related construction recently appeared in \cite[Appendix A]{RSWWZ}, which can be used to establish full functoriality (not just up to sign) in the context of {\em singular foams}. In more detail, that reference considers singular $\mathfrak{gl}_2$ foams where double points are allowed between 1- and 2-labeled faces or between 2-labeled faces. However, it is pointed out in \cite[Remark A.2]{RSWWZ} that transverse double points could be allowed between 1-labeled faces as well.

For convenience of the reader, we briefly summarize this construction.
The invariant of singular $\mathfrak{gl}_2$ foams is defined in Section A.2.3 in \cite{RSWWZ}: remove a small 4-ball neighborhood of each double point, and isotope it along an arc to a boundary 3-sphere (dragging the rest of the foam by an isotopy) to get a collection of split Hopf links in $S^3$. Then decorate the Hopf links by some element of their $\mathfrak{gl}_2$ homology, and the resulting foam cobordism defines a chain map between the chain complexes of the original links. The key fact is that the homotopy class of this chain map is independent of the choice of arcs, and this is shown analogously to the proof of \cite[Theorem 5.2]{MWW}. This proof crucially relies on the sweep-around property \cite[Theorems 1.1, 3.3]{MWW}. The resulting chain map, corresponding to a movie of diagrams representing an immersed foam cobordism, is computed in \cite[Lemma A.10]{RSWWZ}.

We remark that both perspectives are useful: the approach using movie moves for singular surfaces considered in this paper, and the skein lasagna -- inspired approach of \cite{RSWWZ}. For example, the Khovanov-Lipshitz-Sarkar stable homotopy refinement is functorial, up to sign, with respect to embedded surface cobordisms \cite{LLS}. That theory is not known to satisfy the sweep-around property, and therefore an approach to extending it to immersed surfaces would rely on checking movie moves established in this paper.

\item
In a series of papers~\cite{IY2,Yo,IY1} N.~Ito and J.~Yoshida introduce and study homology of singular links, by defining the complex for a singular crossing to be the cone of the map $A$ above. One of their goals is to categorify Vassiliev invariants of links. In particular, in the latest paper~\cite{IY1}, the authors show the categorical analogue of the $4T$ relation on weight spaces, for the singular link Khovanov homology complexes built via their construction. Our use of the map $A$, to extend homology from embedded to singular embedded surfaces, is different from theirs. 

\item 

A map on Lee homology associated with double points was considered in \cite[Section 3]{AlishahiDowlin}. After posting our paper we learned that the functoriality of an analogue of this map for Khovanov homology with respect to immersed surface cobordisms, over ${\mathbb Z}/2$, has also been considered in \cite{Gujral}. To relate this map to the terminology of our paper, it is the sum of the $A$ and $B$ maps in Definition \ref{def: AB}.
\end{enumerate}

\newcommand{\UL}{W}
\renewcommand{\LL}{X}
\newcommand{\UR}{Y}
\newcommand{\LR}{Z}

\subsection{Checking the new movie moves}\label{ssec-moviemoves}

As mentioned in the introduction, Khovanov homology is known to satisfy the
movie moves for smooth oriented surfaces up to sign. In order to show that the chain homotopy
class of a map assigned to a surface with double points is independent of
isotopy we must check the new movie moves which contain double point singularities.

The new movie moves involving double points are enumerated by Theorem \ref{thm: movie moves} in Section \ref{sec-moviemoves}.
Movies MM16 and MM17 are the only ones which require non-trivial verifications. The lemmas below are introduced in order to simplify exposition later.
Lemma \ref{lem-movie} 
gives criteria in which a diagram of chain complexes commutes up to homotopy (and sign). 

\begin{lemma}\label{lem-movie}
Fix a $(t,q)$-bidegree $\sss$ and suppose that we are given a (not-necessarily commutative) diagram of chain complexes:
$$\begin{tikzpicture}
    \node (X) {$\UL$};
    \node (BX) [below=1cm of X] {$\LL$};
    \node (Y) [right=1cm of X] {$\UR$};
    \node (BY) [below=1cm of Y] {$\LR.$};
    \draw[->](X) to node [swap] {$\alpha$} (BX);
    \draw[->](Y) to node {$\gamma$} (BY);
    \draw[->](X) to node {$\beta$} (Y);
    \draw[->](BX) to node {$\eta$} (BY);
    \end{tikzpicture}$$
Then either set of conditions (1) or (2) below imply that this diagram commutes up to homotopy and sign.
\begin{enumerate}
\item  \begin{enumerate}
        \item $\alpha$, $\beta$ are degree zero homotopy equivalences
        \item $H_\sss(\Hom^*(\LL,\LR)) \cong 0 \textnormal{ or } \ZZ$
        \item $[\eta]$ and $[\gamma]$ generate $H_\sss(\Hom^*(\LL,\LR))$ and $H_\sss(\Hom^*(\UR,\LR))$ respectively
       \end{enumerate}
\item      \begin{enumerate}
       \item $\alpha$, $\gamma$ are degree zero homotopy equivalences
        \item $H_\sss(\Hom^*(\UL,\UR)) \cong 0 \textnormal{ or } \ZZ$
       \item $[\beta]$ and $[\eta]$ generate $H_\sss(\Hom^*(\UL,\UR))$ and $H_\sss(\Hom^*(\LL,\LR))$ respectively
      \end{enumerate}

    \end{enumerate}
  \end{lemma}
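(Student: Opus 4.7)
The plan is to reduce both cases to a single observation: two generators of a cyclic group isomorphic to $0$ or $\ZZ$ must agree up to an overall sign. I first confirm that $\eta\alpha$ and $\gamma\beta$ are chain maps $\UL \to \LR$ of bidegree $\sss$, which is immediate since, in either case, one composition factor has degree zero and the other has bidegree $\sss$. The desired conclusion is then that $[\eta\alpha] = \pm[\gamma\beta]$ in $H_\sss(\Hom^*(\UL, \LR))$.

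For case (1) I would invoke the standard fact that a chain homotopy equivalence of complexes induces a chain homotopy equivalence of the associated $\Hom^*$-complexes by pre- and post-composition. Applied to $\alpha$, the precomposition map
$$\alpha^* \colon \Hom^*(\LL, \LR) \longrightarrow \Hom^*(\UL, \LR), \qquad f \mapsto f\alpha,$$
is a homotopy equivalence, so it induces an isomorphism on cohomology. Under hypothesis (1)(c), $[\eta]$ generates $H_\sss(\Hom^*(\LL, \LR))$; hence $[\eta\alpha] = \alpha^*[\eta]$ generates $H_\sss(\Hom^*(\UL, \LR))$. Moreover, by (1)(b), this latter group is isomorphic to $0$ or $\ZZ$. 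Applying the same reasoning to the homotopy equivalence $\beta$ and the class $[\gamma]$, the precomposition map $\beta^* \colon \Hom^*(\UR, \LR) \to \Hom^*(\UL, \LR)$ sends $[\gamma]$ to $[\gamma\beta]$, which is therefore also a generator. Two generators of $0$ or $\ZZ$ differ by a unit in $\ZZ$, namely $\pm 1$, so $\eta\alpha \simeq \pm\gamma\beta$.

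For case (2) the roles of the two equivalences are interchanged. Postcomposition with $\gamma$ gives an induced isomorphism $\gamma_* \colon H_\sss(\Hom^*(\UL, \UR)) \to H_\sss(\Hom^*(\UL, \LR))$ which sends $[\beta]$ to $[\gamma\beta]$, while precomposition with $\alpha$ gives an isomorphism $\alpha^* \colon H_\sss(\Hom^*(\LL, \LR)) \to H_\sss(\Hom^*(\UL, \LR))$ sending $[\eta]$ to $[\eta\alpha]$. By (2)(b) and (2)(c), both images are generators of a group that is isomorphic to $0$ or $\ZZ$, so once again they coincide up to sign.

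There is no genuine obstacle to this argument; the only things to verify carefully are the bigrading bookkeeping, and the well-known homological-algebra fact that pre/postcomposition by a chain homotopy equivalence of complexes is itself a chain homotopy equivalence of mapping complexes, which is immediate from functoriality of $\Hom^*$ in each variable.
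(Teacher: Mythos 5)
Your argument is correct and is essentially the same as the paper's: in each case you transport the two generators into a common homology group $H_\sss(\Hom^*(\UL,\LR))$ via the isomorphisms induced by the degree-zero homotopy equivalences, and then use that two generators of $0$ or $\ZZ$ agree up to sign. No gaps.
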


\begin{remark}
In each case, condition $(b)$ and condition $(a)$ combine to imply that other $\Hom$-complexes have homology which is isomorphic to $0$ or $\ZZ$.
  \end{remark}

\begin{proof}
{\em For (1): } Consider
\newcommand{\dv}{.25cm}
$$\begin{tikzpicture}
\node (X) {$H_\sss(\Hom^*(\UR,\LR))$};
\node (Y) [right=1cm of X] {$H_\sss(\Hom^*(\UL,\LR))$};
\node (Z) [right=1cm of Y] {$H_\sss(\Hom^*(\LL,\LR))$};
\draw[->] (X) to node {$\beta^*$}  (Y);
\draw[->] (Z) to node [swap] {$\alpha^*$}  (Y);

\node (Xd) [below=\dv of X] {$\gamma : \UR\to \LR$};
\node (Yd) [below=\dv of Y] {$\gamma \beta : \UL \to \LR$};
\node (Zd) [below=\dv of Z] {};
\draw[|->] (Xd) to node {} (Yd);

\node (Ydd) [below=\dv of Yd] {$\eta\alpha : \UL \to \LR$};
\node (Zdd) [right=1.9cm of Ydd] {$\eta : \LL \to \LR$};
\draw[|->] (Zdd) to node {} (Ydd);
    \end{tikzpicture}$$

Since $\alpha$ and $\beta$ are degree zero homotopy equivalences, the maps $\alpha^*$ and $\beta^*$ are isomorphisms between the homology groups pictured above. There are two cases,

If $H_\sss(\Hom^*(\LL,\LR)) \cong \ZZ$ then $H_\sss(\Hom^*(\UL,\LR)) \cong \ZZ$ via $\alpha^*$ and $H_\sss(\Hom^*(\UR,\LR))\cong \ZZ$ via $(\beta^*)^{-1}\alpha^*$. By assumption $[\eta]$ generates  $H_\sss(\Hom^*(\LL,\LR))$ and $[\gamma]$ generates $H_\sss(\Hom^*(\UR,\LR))$. Since $Aut(\ZZ)=\{\pm 1_\ZZ \}$, $\alpha^*([\eta])=\pm\beta^*([\gamma])$ or $\eta\alpha\simeq \pm \gamma\beta$.

If $H_\sss(\Hom^*(\LL,\LR)) \cong 0$ then the same argument shows that all of the homology groups are zero and $\eta\alpha\simeq \gamma\beta$.

{\em For (2): } Same proof as (1) after replacing $\beta^*$ and $\alpha^*$ with $\gamma_*$ and $\alpha^*$.

  \end{proof}

Recall that if $\beta\in \Br_n$ is a braid then, given any orientation of the strands, using the chain complexes associated to the crossings in Eqn. \eqref{eq-crossing}, gives a chain complex 
$$T_\beta := \C^\pm_{i_1} \ott \C^\pm_{i_2} \ott \cdots \ott \C^\pm_{i_n} \conj{ where } \beta = \s_{i_1}^\pm\s_{i_2}^\pm\cdots \s_{i_n}^\pm $$
associated to the braid $\beta$ as well as an inverse chain complex
$T^{-1}_\beta := T_{\beta^{-1}}$. This is an inverse in the sense that there are canonical homotopy equivalences of the form
\begin{equation}\label{eq-inverses}
  T_\beta \ott T_\beta^{-1} \simeq 1_n \conj{ and } T_\beta^{-1} \ott T_\beta \simeq 1_n.
  \end{equation}
In other words, $T_\beta$ is invertible in the homotopy category $K^{\mathsf{b}}(\PC_V)$.

Recall for (2) below that $\Frob := qH^*(\mathbb{S}^2)$ is associated to the circle $\mathbb{S}^1$ in $\PC_V$.

\begin{lemma}\label{lem-invert}
Let $\UL$ and $\LL$ be chain complexes in $\PC_V$. Then for any braid $\b\in\Br_n$ there are homotopy equivalences of mapping complexes:
\begin{enumerate}
 \item $r_\b : \Hom^*(\UL,\LL) \xto{\sim} \Hom^*(\UL\ott T_\b, \LL\ott T_\b)$ and $\ell_\b : \Hom^*(\UL,\LL) \xto{\sim} \Hom^*(T_\b \ott \UL, T_\b \ott \LL)$ 
  \item $\Hom^*(\UL\sqcup 1, \LL\sqcup 1) \xto{\sim} \Hom^*(\UL,\LL)\ott \Frob$.
\end{enumerate}
  \end{lemma}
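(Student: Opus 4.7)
\emph{Proof plan.} My approach is to apply the trace duality of Proposition~\ref{prop-dual} together with the invertibility of $T_\beta$ in the homotopy category, using the canonical homotopy equivalences of Eqn.~\eqref{eq-inverses} and the identification of a circle in $\PC_V$ with $V$ from Eqn.~\eqref{eq-localize}.

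For part~(1), I would define $r_\beta$ and $\ell_\beta$ on the level of chains by $r_\beta(f) := f \ott 1_{T_\beta}$ and $\ell_\beta(f) := 1_{T_\beta} \ott f$; both are chain maps of $(t,q)$-degree zero. Fixing canonical Reidemeister-II homotopy equivalences $\varphi_\beta : T_\beta \ott T_{\beta^{-1}} \xto{\sim} 1_n$ from Eqn.~\eqref{eq-inverses}, the natural candidate inverse to $r_\beta$ is
\[
s_\beta(g) \;:=\; (1_X \ott \varphi_\beta) \circ (g \ott 1_{T_{\beta^{-1}}}) \circ (1_W \ott \varphi_\beta^{-1}),
\]
and then $r_\beta \circ s_\beta \simeq \mathrm{id}$ and $s_\beta \circ r_\beta \simeq \mathrm{id}$ reduce to the triangle identities governing $\varphi_\beta$ and $\varphi_\beta^{-1}$. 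The argument for $\ell_\beta$ is symmetric.

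For part~(2), the plan is to combine the trace duality extended to mapping complexes (Lemma~\ref{lemma-d4}) with the evaluation of a closed circle in $\PC_V$. Since the identity strand is self-dual, composition followed by trace yields
\[
\Tr\bigl((X \sqcup 1) \ott (W \sqcup 1)^\vee\bigr) \;\cong\; \Tr(X \ott \dthree{W}) \sqcup \SS^1,
\]
an extra disjoint circle beyond $\Tr(X \ott \dthree{W})$. Applying Eqn.~\eqref{eq-localize} together with the grading shift from the duality formula then supplies the additional tensor factor of $V$, giving the claimed equivalence. A cleaner alternative, bypassing the grading bookkeeping, is to apply the delooping isomorphism $D \sqcup \SS^1 \cong qD \opp q^{-1}D$ of \S\ref{ssec-khovanov} directly to the source and target complexes, decomposing $\Hom^*(W \sqcup 1, X \sqcup 1)$ into four shifted copies of $\Hom^*(W, X)$ that assemble into $\Hom^*(W, X) \ott V$. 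The main obstacle I anticipate is in part~(1): producing honest chain-level homotopy inverses rather than quasi-isomorphisms requires a careful coherence check among the Reidemeister-II homotopies under nested tensoring and composition, whereas part~(2) is essentially bookkeeping.
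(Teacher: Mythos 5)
Your proposal is correct and follows essentially the same route as the paper: part (1) uses the same map $f\mapsto f\ott 1_{T_\b}$ with homotopy inverse obtained by tensoring with $T_{\b^{-1}}$ and invoking the equivalences of Eqn.~\eqref{eq-inverses}, and your two suggested arguments for part (2) (delooping the extra circle, or applying the trace duality of Lemma~\ref{lemma-d4}) are exactly the two alternatives the paper gives. The coherence concern you raise at the end is real but is treated at the same level of detail in the paper, which simply asserts that the equivalences of Eqn.~\eqref{eq-inverses} induce a natural equivalence.
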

\begin{proof}
{\em For (1):}   The map $r_\b(f) := f \ott 1_{T_\beta}$  has a homotopy inverse $b_\b(g) := g \ott 1_{T_\b^{-1}}$ because composing gives
$$\Hom^*(\UL,\LL) \xto{r_\b} \Hom^*(\UL\ott T_\b, \LL\ott T_\b) \xto{b_\b} \Hom^*(\UL\ott T_\b\ott T_\b^{-1}, \LL\ott T_\b \ott T_\b^{-1})$$
and the homotopy equivalences in Eqn. \eqref{eq-inverses} induce a natural equivalence between the righthand side and the lefthand side. The argument is parallel for the map $\ell_\b$.

{\em For (2):} Any interaction between a cobordism with a disjoint sheet can be disentangled using delooping isomorphism. Alternatively, this follows immediately from Lemma \ref{lemma-d4}.
  \end{proof}

We are now prepared to discuss the movie moves listed in Theorem~\ref{thm: movie moves} in Section~\ref{sec-moviemoves}. 

\begin{proof}{({\bf MM16})}\label{proof-MM16}
Consider movie move \#16: passing a node over a type-II move.

\[\includegraphics[width=.4\textwidth]{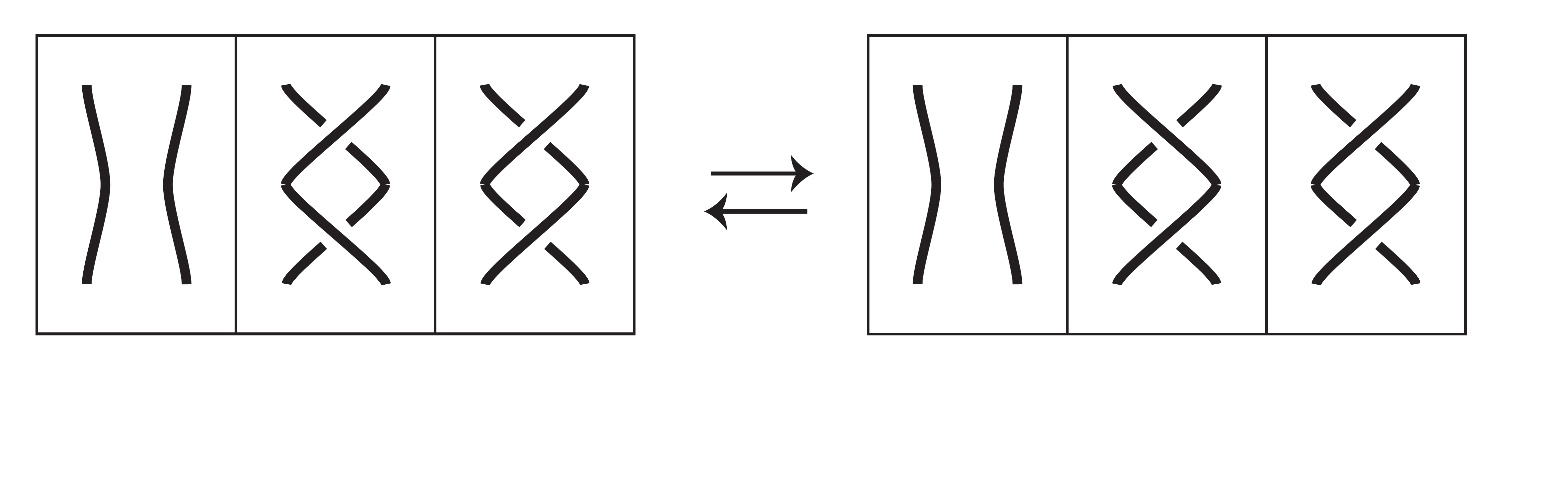}\]

For the move pictured above, each strand can be oriented, either to point up or to point down, so there are four oriented versions of this move. They involve either a positive or negative double point, resulting in one of the maps $A, B$.
Let's consider the case of both strands pointing up or both strands pointing down; in this case the diagram can be written as
$$\begin{tikzpicture}
    \node (X) {$1\ott 1$};
    \node (BX) [below=1cm of X] {$\C^+\ott \C^-$};
    \node (Y) [right=1cm of X] {$\C^-\ott \C^+$};
    \node (BY) [below=1cm of Y] {$\C^+\ott \C^+.$};
    \draw[->](X) to node [swap] {$R2$} (BX);
    \draw[->](Y) to node {$\Bmap\ott 1$} (BY);
    \draw[->](X) to node {$R2$} (Y);
    \draw[->](BX) to node {$1\ott \Bmap$} (BY);
    \end{tikzpicture}$$
Now using part (1) of Lem. \ref{lem-movie} with $(t,q)$-bidegree $\sss := \vnp{\Bmap}_{t,q} = (2,4)$ we verify the assumptions
\begin{enumerate}[(a)]
\item both maps $\alpha$ and $\beta$ are induced by Reidemeister 2 moves which are degree zero homotopy equivalences. 
\item the duality lemma \eqref{lemma-d4} gives
$$H_\sss(\Hom^*(\C^+\ott \C^-,\C^+\ott \C^+)) \cong q^{-2} H_+ \cong \ZZ$$
where $H_+$ is the positive Hopf link homology, see Ex. \ref{ex-dual}.
\item Lem. \ref{lem-invert} tells us that there is a homotopy equivalence $r_{\s}$ such that  $r_{\s}(\Bmap) = \Bmap \ott 1$. It follows that
$$H_\sss\Hom^*(\C^+\ott \C^-,\C^+\ott \C^+) \cong \ZZ\inp{1\ott \Bmap}.$$
In the same way, the isomorphism $\ell_{\s^{-1}}(\Bmap)= 1\ott \Bmap$ shows that the map $1\ott \Bmap$ generates.
\end{enumerate}

For the other two orientation choices (one strand in MM16 pointing up, the other one pointing down) the proof involves the map $A$ and is analogous. In more detail, the diagram reads 
$$\begin{tikzpicture}
    \node (X) {$1\ott 1$};
    \node (BX) [below=1cm of X] {$\C^-\ott \C^+$};
    \node (Y) [right=1cm of X] {$\C^+\ott \C^-$};
    \node (BY) [below=1cm of Y] {$\C^-\ott \C^-.$};
    \draw[->](X) to node [swap] {$R2$} (BX);
    \draw[->](Y) to node {$\Amap\ott 1$} (BY);
    \draw[->](X) to node {$R2$} (Y);
    \draw[->](BX) to node {$1\ott \Amap$} (BY);
    \end{tikzpicture}$$
The proof proceeds as in the previous case, using Lem. \ref{lem-movie} and $(t,q)$-bidegree $\sss := \vnp{\Amap}_{t,q} = (-2,-6)$ corresponding to a generator of $q^{-2} H_-$.
\end{proof}

\begin{proof}{({\bf MM17})}
  Consider movie move \#17: passing a node through a type-III move,

\[\includegraphics[width=.4\textwidth]{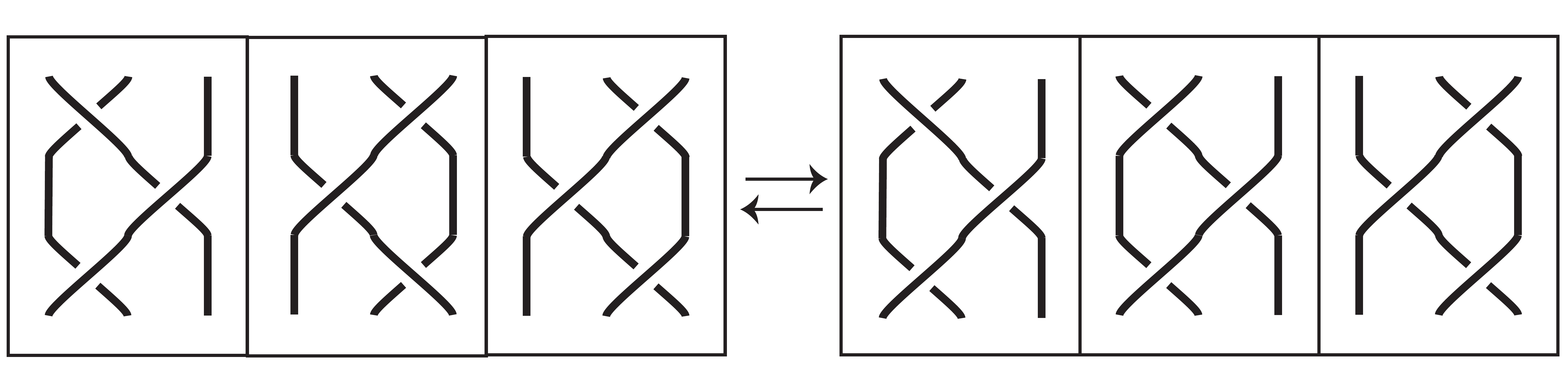}\]

For the move pictured above, each strand can be oriented in either direction, giving a total of eight cases. As in the previous proof, the map $A$ is involved in half of the cases and the map $B$ in the other half. 
When all of the strands are pointing upward we get the diagram below.
\begin{equation} \label{eq:MM17diagram}
\begin{tikzpicture}
    \node (X) {$\C^-_1\ott T$};
    \node (BX) [below=1cm of X] {$T\ott \C_2^-$};
    \node (Y) [right=1cm of X] {$\C_1^+\ott T$};
    \node (BY) [below=1cm of Y] {$T\ott \C^+_2$};
    \draw[->](X) to node [swap] {$R3$} (BX);
    \draw[->](Y) to node {$R3$} (BY);
    \draw[->](X) to node {$\Bmap\ott 1$} (Y);
    \draw[->](BX) to node {$1\ott \Bmap$} (BY);
    \end{tikzpicture},
    \end{equation}
where $T:= \C^+_2\ott \C^+_1$. Without orientations, this is the same as the braids pictured below. 
 \begin{figure}[ht]
 \centering{
\includegraphics[height=5cm]{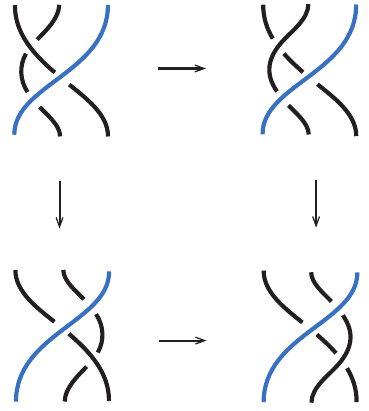}
{\small
\put(-77,125){$B\otimes 1$}
\put(-126,70){$R3$}
\put(-15,70){$R3$}
\put(-77,30){$1\otimes B$}
}}
\label{fig:CD} 
\end{figure}

Now using part (2) of Lem. \ref{lem-movie} with $(t,q)$-bidegree $\sss := \vnp{\Bmap}_{t,q} = (2,4)$ we verify the assumptions
\begin{enumerate}[(a)]
\item the maps $\alpha$ and $\gamma$ are induced by Reidemeister 3 moves, so they are degree zero homotopy equivalences. 
\item the duality Eqn. \eqref{eq-d4} identifies the horizontal mapping space with the trace of a braid that is equivalent to the positive Hopf link and an unknot $H_+\sqcup \mathbb{S}^1$. The trace described here is pictured below.

\begin{figure}[H]
\hspace{.75in}\includegraphics[height=4cm]{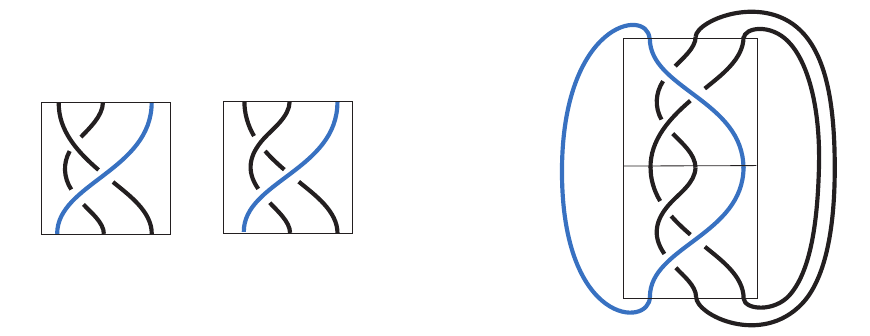}
{\large 
\put(-327,53){${\rm Hom}\Bigg( $}
\put(-229,38){$,$}
\put(-171,53){$\Bigg) $}
\put(-158,52){$\cong\; {\rm CKh}$}
}
\end{figure}

Here is a line-by-line proof:
\begin{align*}
  \Hom^*(\C_1^-\ott T,\C_1^+\ott T) &= \Hom^*(\C_1^-\ott \C^+_2\ott \C^+_1,\C_1^+\ott \C^+_2\ott \C^+_1)\\
  &\cong q^{-3} \Tr((\C_1^-\ott \C^+_2\ott \C^+_1)^\vee \ott \C_1^+\ott \C^+_2\ott \C^+_1)\\
&\cong q^{-3} \Tr(\C_1^-\ott \C^-_2\ott \C^+_1\ott \C_1^+\ott \C^+_2\ott \C^+_1)\\
&\simeq q^{-3} \Tr(\C_1^-\ott \C^-_2\ott \C^+_1\ott (\C_1^+\ott \C^+_2\ott \C^+_1))\\
&\simeq q^{-3} \Tr(\C_1^-\ott \C^-_2\ott (\C^+_1\ott \C_2^+\ott \C^+_1)\ott \C^+_2)\\
&\simeq q^{-3} \Tr(\C_1^-\ott (\C^-_2\ott \C^+_2)\ott \C_1^+\ott \C^+_2\ott \C^+_2)\\
&\simeq q^{-3} \Tr((\C_1^-\ott 1\ott \C_1^+)\ott \C^+_2\ott \C^+_2)\\
  &\simeq q^{-3} \Tr(\C^+_2\ott \C^+_2)\\
  &\cong q^{-3}(q+q^{-1})H_+.
  \end{align*}
The additional unknot $\mathbb{S}^1$ contributes the factor $q+q^{-1}$ to the $\Hom$-space. This is a consequence of computing the trace of $(\sigma_2)^2\in \Br_3$ as a braid of index $3$ and the delooping isomorphism, see the illustration above. Using the homology $H_+$ from Example~\ref{ex-dual} shows that 
$$H_\sss\Hom^*(\C_1^-\ott T,\C_1^+\ott T)\cong \ZZ$$
\item By Lemma \ref{lem-invert} there are isomorphisms $r_\beta$ and $\ell_\beta$, with $\beta=\s_2\s_1$, taking the map $\Bmap$ to $\Bmap\ott 1$ and $1\ott \Bmap$. This shows that these maps generate homologies of their respective mapping spaces in degree $\sss$.
\end{enumerate}

This completes the argument for half of the orientations. For the other half, the top and bottom maps in diagram \eqref{eq:MM17diagram} are replaced with $\Amap\ott 1$ and $1\ott \Amap$ respectively, and the proof is completed using the $(t,q)$-bidegree $\sss := \vnp{\Amap}_{t,q} = (-2,-6)$ and the link $H_-\sqcup \mathbb{S}^1$.

Just like there are several version of the third Reidemeister move, there are several versions of the movie move MM17. The proof for other versions is directly analogous to the one given above.
  \end{proof}

\begin{proof}{({\bf MM18})}
In the context of the Khovanov construction, the horizontal arrows are identity maps so the diagram commutes.
  \end{proof}

\begin{proof}{({\bf Thm. \ref{thm: movie moves} (5)})}
Movie moves involving far-commutativity commute because the maps $A$ and $B$ are applied to the same diagrams after planar isotopies which induce identity maps in the setting of $\PC_V$.
  \end{proof}

\begin{rmk}\label{univfamilyrmk}
Here are two observations about gradings in the equivariant setting $\PC_A$ of Remark \ref{deformedhopf}.
\begin{enumerate}
    \item The only monomial $h^i t^j$ in the ground ring $R$ with non-negative $q$-degree is the identity element $1\in R$ in $q$-degree $0$.
    \item The maps $A$ and $B$ as elements of $q^{-2}H_-$ and $q^{-2}H_+$ generate the class of largest $q$-degree within their respective $t$-degrees.
\end{enumerate}
Together these imply that the maps $A$ and $B$ are the only generating classes within their respective $(t,q)$-degrees of the $\Hom$-complexes associated to the homologies $q^{-2}H^{equiv}_\pm \cong q^{-2}H_{\pm} \ott R$ in $\PC_A$. So there are isomorphisms: 
$$H_{2,4}(\Hom^\ast(C^+,C^-)) \cong \ZZ\inp{B} \conj{ and } H_{-2,-6}(\Hom^\ast(C^-,C^+)) \cong \ZZ\inp{A}.$$
These equations and the two observations suffice to amend the arguments above
for the $\PC_A$ theory.  We conclude that there is a corresponding extension of the 2-functor $\kappa' : \Tang^4_b\to K^{\mathsf{b}}(\PC_A)/\{\pm 1\}$ to a 2-functor $\tilde{\kappa}' : \Tang^4_\times \to K^{\mathsf{b}}(\PC_A)/\{\pm 1\}$ which assigns the maps $A$ and $B$ to double point singularities. 
\end{rmk}

\section{Movie moves for immersed surface cobordisms} \label{sec-moviemoves}\label{sec:mm}

The Carter-Saito movie moves \cite[2.6]{CSKnottedSurfaces}, \cite{CRS} provide a combinatorial description of isotopies of surfaces embedded in 4-space. The movies involve planar projections of links in $\R^3\times\{ s\}\subset \R^3\times\R=\R^4$ which are cross-sections of the surface. We  refer to these moves according to their enumeration MM1 - MM15, cf. \cite[Figures 11-13]{MR2174270}. Movie moves for foams were also studied in \cite{HoelWalker}. Those authors complete the list  that was proposed in~\cite{JSC}. In addition,~\cite{Borodzik} study movie moves in the presence of symmetries on the knotted surfaces.
   
In this section we formulate and prove an extension of the movie moves to {\em immersed} surfaces in $\R^4$. 
To set up the notation, consider a properly immersed oriented compact surface $F$ in $\R^3 \times  [0, 1]$ and proper isotopies thereof. An immersion is of the form:
\[ (F; \partial_0 F \sqcup \partial_1 F) \looparrowright (\R^3 \times [0, 1]; \R^3 \times \{0\} \sqcup \R^3 \times \{1\}),\]
where one or both of $\partial_0 F, \partial_1 F$ may be empty. 

As discussed in the introduction, the singularities of such surfaces consist of double points, which we will also refer to as {\em nodes}. There are finitely many nodes, and they are in the interior of the surface.
The boundaries
$\partial_0 F, \partial_1 F$ are classical links embedded in  $\R^3 \times \{j\}$, $j = 0, 1$. Our goal is to analyze {\em isotopies between surfaces with nodes}, i.e. the restriction to $F$ of ambient proper isotopies of $\R^3\times [0,1]$. In particular, the nodes stay in the interior of $\R^3\times [0,1]$ and their number remains fixed during an isotopy.

The tool that will be used in our analysis is a (retinal) {\em chart}, a certain graph which arises from a planar projection of $F$, see \cite[1.5]{CSKnottedSurfaces}, \cite[Section 3.2]{CRS}. The charts of properly isotopic embedded surfaces are related by moves discussed in \cite[Theorem 2.17]{CSKnottedSurfaces}. We caution that the choice of vertical and horizontal axes in our diagrams differs from that in \cite{CSKnottedSurfaces}, and our charts have additional decorations which we describe in Section \ref{sec:charts}. 

The structure of charts of surfaces embedded in 4-space and the moves on charts encoding isotopies of such surfaces were deduced in \cite[Section 4]{CRS} from the analysis of singularities of generic maps of surfaces into $\R^2$ in \cite{Goryunov, Rieger, West}. The same type of analysis applies in our context, where a generic map of a surface into $\R^2$ is obtained starting from a surface with double points, rather than from an embedded surface in $\R^4$, and projecting onto a plane (see Section \ref{sec:charts}). 

We start by setting up the notation for encoding the links arising as the cross-sections of $F$.

{\bf Convention and terminology.} The interval factor of $\R^3\times [0, 1]$ is parametrized by the variable $s$. The coordinates of $\R^3$ are denoted $x, y, z$, and the crossings of a link in $\R^3$ are defined with respect to the $z$ coordinate, that is links are projected onto the $xy$-plane. The $y$-coordinate will serve as the height function in the $xy$-plane. The terms {\em type-I, II, or III} will refer to Reidemeister moves of a given type.

\subsection{Encoding the cross-sections of a surface} Consider an immersion $F\looparrowright \R^3 \times [0, 1]$ which is in general position with respect to the projection $\R^3\times [0,1]\to [0,1]$. In more detail, the critical points of the composition 
\[ F\looparrowright \R^3 \times [0, 1] \overset{p}{\longrightarrow} [0,1] \]
are non-degenerate, have distinct values in $[0,1]$, and are disjoint from the double points of the immersion.

Given $s\in [0,1]$, the intersection $F\cap (\R^3\times \{ s\})$ is called a {\em still} of a movie. The surface $F$ will be assumed to be oriented, and the stills are all oriented accordingly as well. Consider the projection onto the first factor, $\R^3\times [0,1]\to \RR^3$, and a further projection $\pi\colon \R^3\to \R^2$ onto the $xy$-plane. 

We can pick a sequence of generic values $\{ s_i\}$ of the parameter $s$ so that
the projections $\pi(F\cap (\R^3\times \{ s_i\}))$ and $\pi(F\cap (\R^3\times \{ s_{i+1}\}))$ of two successive stills differ by a birth, death, saddle, Reidemeister move, $\psi$-move (illustrated in Figure \ref{fig:A}),
crossing change (node), cusp, or critical exchange (exchange of the heights of critical points). 

The projection of a still onto the $xy$-plane is
an immersed curve that has isolated transverse double points. The critical points with respect to the height function $y$ of the projection of the still
have distinct $y$-coordinates, and these are distinct from
the $y$-coordinates of the crossing points which also occur at distinct levels.

The {\em bookmark code} is a way to combinatorially encode the projection onto the $xy$-plane of
a still of a movie; a similar method will be used to label the edges of the chart graph defined in Section \ref{sec:charts}. To put this in the context of our goal, Theorem \ref{thm: movie moves} below, the isotopies of immersed surfaces will first be encoded using moves on charts which are then translated to movie moves on stills using the bookmark code.

Figure \ref{fig:I} indicates how the projection of an oriented link can be written in terms of symbols corresponding to crossings and to optima (maxima and minima).  The fonts $\nx, \overline{\nx}, \ncap, \ncup$
are adorned with dots at the NE, SE, SW, or
NW directions to indicate that directional arrows, corresponding to the link orientation, emerge from such points.
The projection $F \cap (\R^3 \times \{s\}) \to \R$ onto the $y$-coordinate is a
Morse function for the link. Since the crossings are assumed to lie at different
vertical $(y)$ levels, there is a bookmarked word that can be used to describe a
knot diagram. The word is constructed from top to bottom. 
\begin{figure}[H]
\includegraphics[width=.3\textwidth]{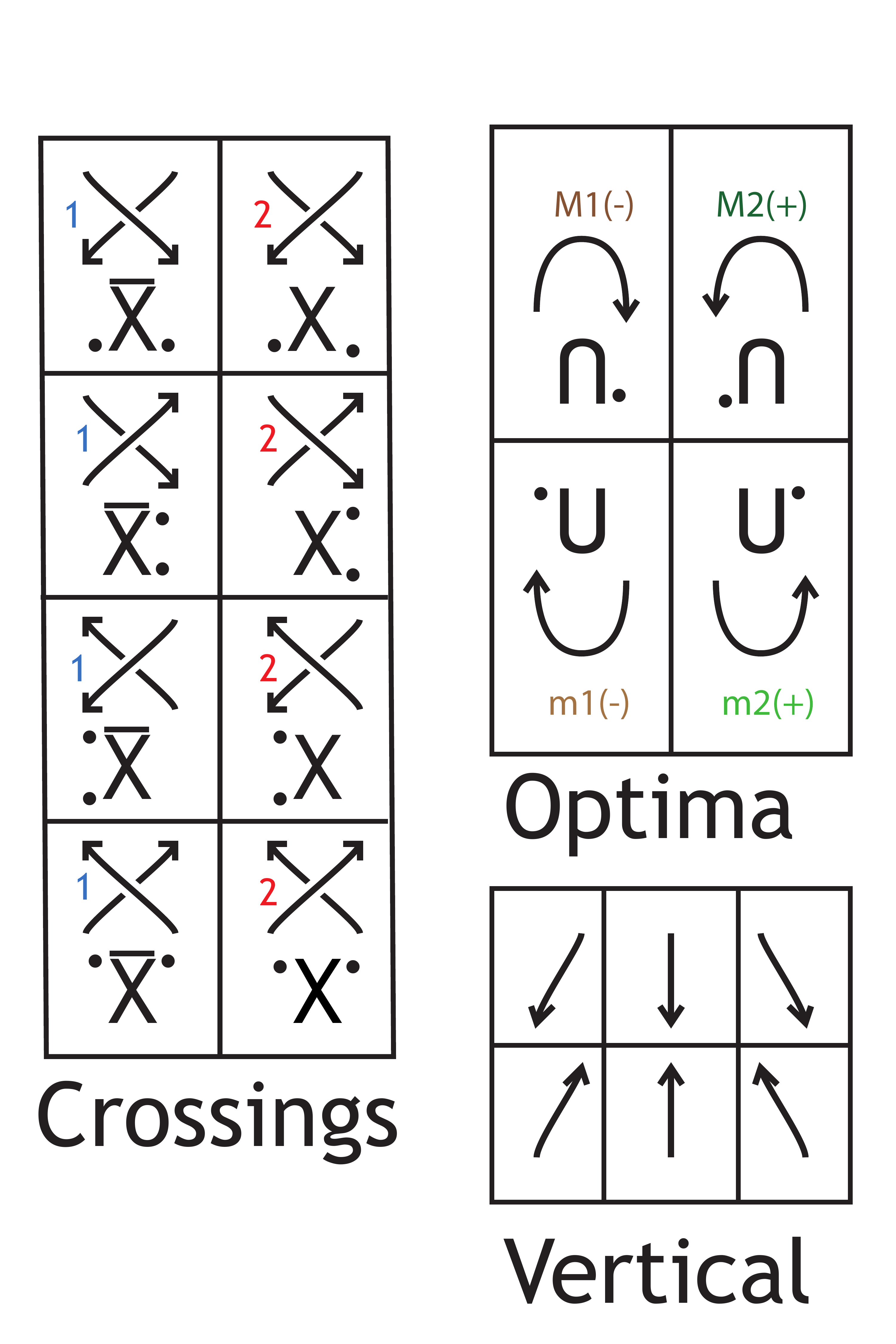}
\medskip
\caption{Types of crossings and critical points (optima) of links, and the corresponding bookmark labels}
\label{fig:I} 
\end{figure}

Each symbol is labeled with a pair of integers that indicate the number of vertical
segments to the left and the number of vertical segments to the right of a given crossing or critical point.
In general, the bookmarked word always starts with $\ncap(0, 0)$ and ends with $\ncup(0, 0)$.
Compare this with the abstract
tensor notation that can be used to describe quantum invariants, cf.~\cite{Kauffman,BaKi}. 
\begin{figure}[H]
\includegraphics[width=.45\textwidth]{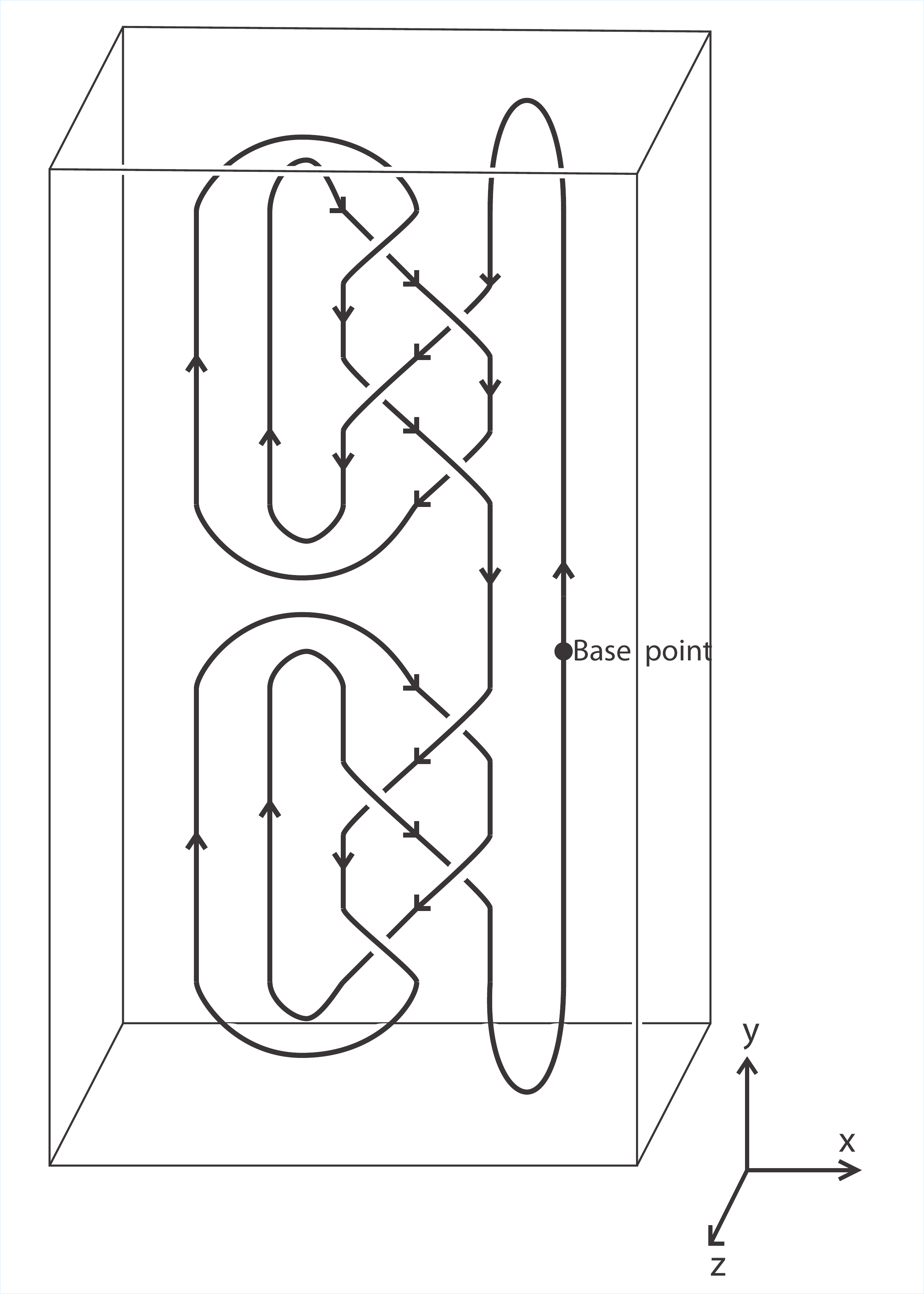}
\medskip
\caption{Connected sum of $4_1\# (-4_1)$}
\label{fig:H} 
\end{figure}
For example, the bookmark word for the knot in Figure \ref{fig:H} is 
\begin{figure}[H]
\includegraphics[height=1.3cm]{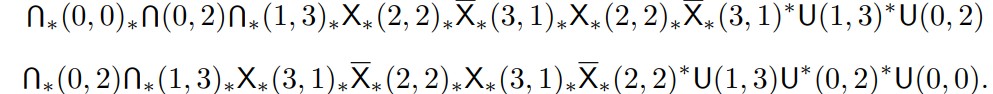}
\label{fig:bookmark} 
\end{figure}

\subsection{Definition and properties of charts}\label{sec:charts} 

A {\em (retinal) chart} is an oriented labeled graph contained in the $(s, y)$-plane. 
The structure of a chart is a Cerf-theoretic description, with additional decorations, of an immersed surface cobordism between classical links.
We stress that there is a chosen preferred direction, $y$, in the plane, and the chart is defined with respect to this choice. 
In the following subsections we define its edges, edge decorations, and the types of vertices, see \cite[1.5]{CSKnottedSurfaces}, \cite[Section 3.2]{CRS} for more details.

\subsubsection{The edges} 
The chart of a surface $F\looparrowright \R^3\times [0,1]$ has two types of edges, corresponding to folds and to
crossings as described next. The edges of the graph may cross. Such crossings are among the vertices of the chart.

According to singularity theory, a generic map from a surface to $\R^2$ has fold singularities. These form a 1-dimensional set upon which the rank of the map drops by 1. At cusps the rank of the projection map drops
to 0. The images of the folds in the $(s,y)$-plane form one type of edge of a chart. The second type of edge corresponds to crossings with respect to the $z$ coordinate: consider the double point arcs of the projection of $F$ to the 3-dimensional space with the $(x,y,s)$-coordinates, and further project them to the $(s,y)$-plane.

To relate the charts to the stills $F\cap (\R^3\times\{ s\})$, note that for generic values of $s$, the intersection of the vertical line $\{ s\}\times \R$ in the $(s,y)$-plane with the chart consists of a finite collection of points corresponding to critical points of the link $F\cap (\R^3\times\{ s\})$ with respect to the $y$ coordinate, and to the crossings of the link $F\cap (\R^3\times\{ s\})$. The $y$ coordinates of these points in the plane are exactly the same as the $y$ coordinates of these critical points, respectively crossings, of the link. 

\subsubsection{Edge decorations}
The fold edges and the crossing (or double point) edges are labeled by the corresponding symbols $\nx, \overline{\nx}, \ncap, \ncup$, and additionally they have a pair of non-negative integer labels. The
first integer indicates the number of surface sheets that are behind the line of
sight or to the left in the $x$-direction of it, and the second indicates the number
of sheets that occlude the fold in the line of sight or to the right of the fold or crossing 
in the $x$-direction. This pair of integers agrees with the bookmark code of a
cross-sectional still. Folds are also decorated with a short  upward or downward pointing arc that indicates the side of the fold at which the surface overlaps. In this way, saddles and optima can be distinguished in the chart. Specifically, at a birth or death these vertical arcs both point inward; at saddles they point outward.

The intersection of a vertical line segment, $s = s_i$, with the chart, that
does not pass through any vertices of the chart, results in the bookmark code for
the link $F \cap (\R^3 \times \{s_i\})$.

\subsubsection{The vertices} \label{subsubsec: vertices}

Consider the critical points (births, deaths, saddle points) of the projection onto the $s$-axis
\[ F\looparrowright \R^3 \times [0, 1] \overset{p}{\longrightarrow} [0,1]. \]
For terminological precision, these critical points and
the Reidemeister moves, cusps, $\psi$-moves, and crossing changes (nodes) will be
included among the {\em critical events}. Critical events are projected onto the $(s, y)$-plane, 
and they represent vertices in the chart that correspond to changes in the bookmark code.

\begin{enumerate}
    \item The critical points of the folds — births, deaths, and saddles — are valence two vertices of the chart graph. They are Morse singularities with respect to the $s$-direction, Figures \ref{fig:J}, \ref{fig:K}. The orientations of the folds at these junctures are inconsistent.    
\begin{figure}[H]
\includegraphics[width=.8\textwidth]{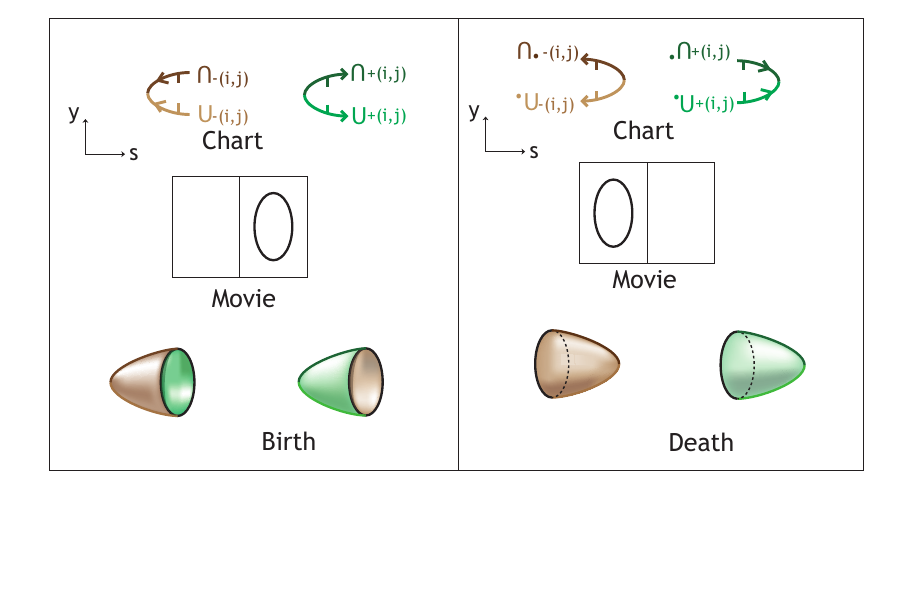}
\caption{Births and deaths}
\label{fig:J} 
\end{figure}
\begin{figure}[H]
\includegraphics[width=.8\textwidth]{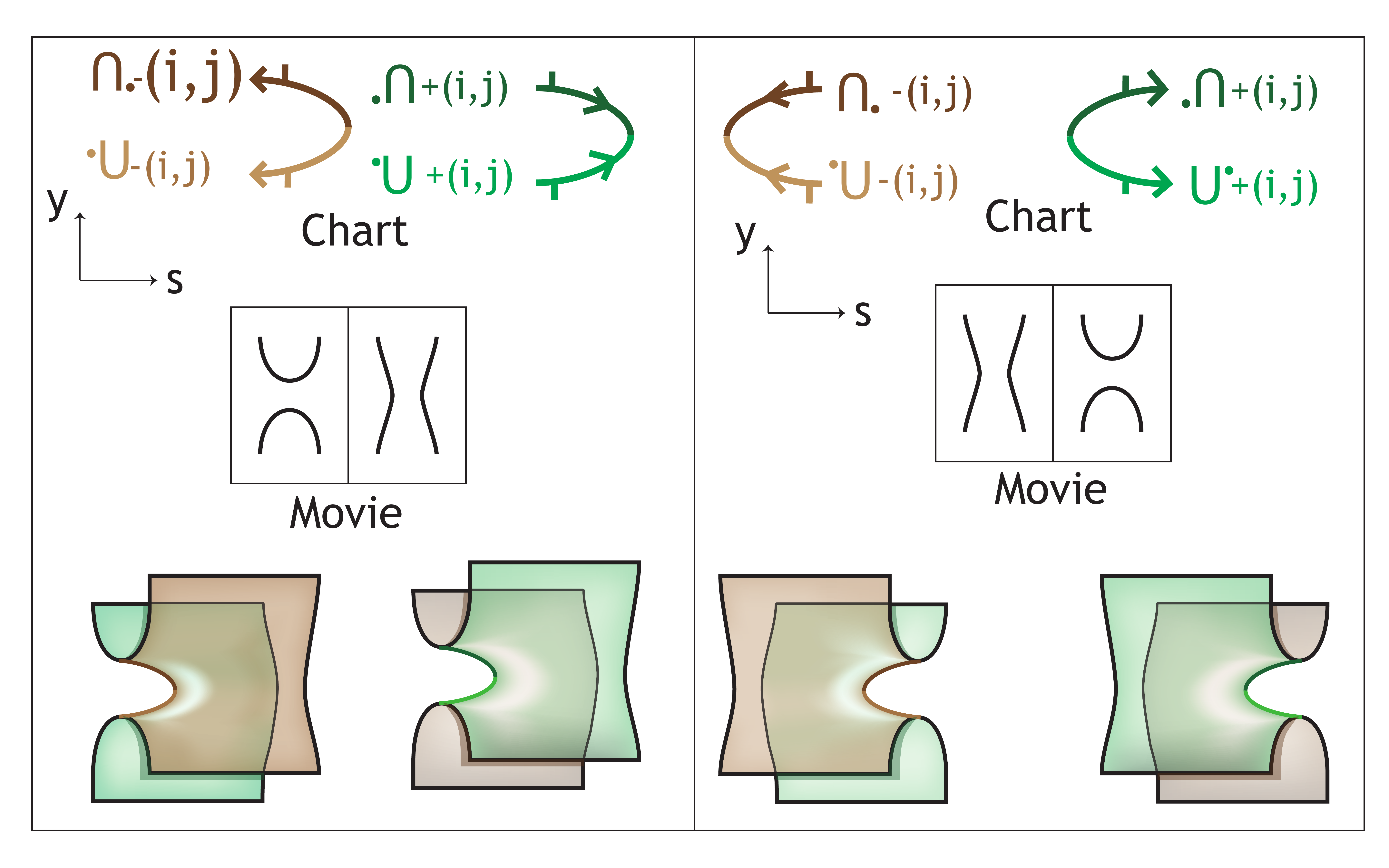} 
\caption{Saddles}
\label{fig:K}
\end{figure}

    \item Cusps are also valence two vertices at which folds are created or terminated. Both a green and a brown edge are incident at a cusp. The orientations of the folds are consistent at the cusp. There are eight types of cusps that depend upon directions, orientations and
bookmarks. Two are illustrated, Figure \ref{fig:L}. The reader is encouraged to catalogue the
remaining cases. See also \cite{CKDiagrammaticAlgebra}. 
    \begin{figure}[H]
\includegraphics[height=4cm]{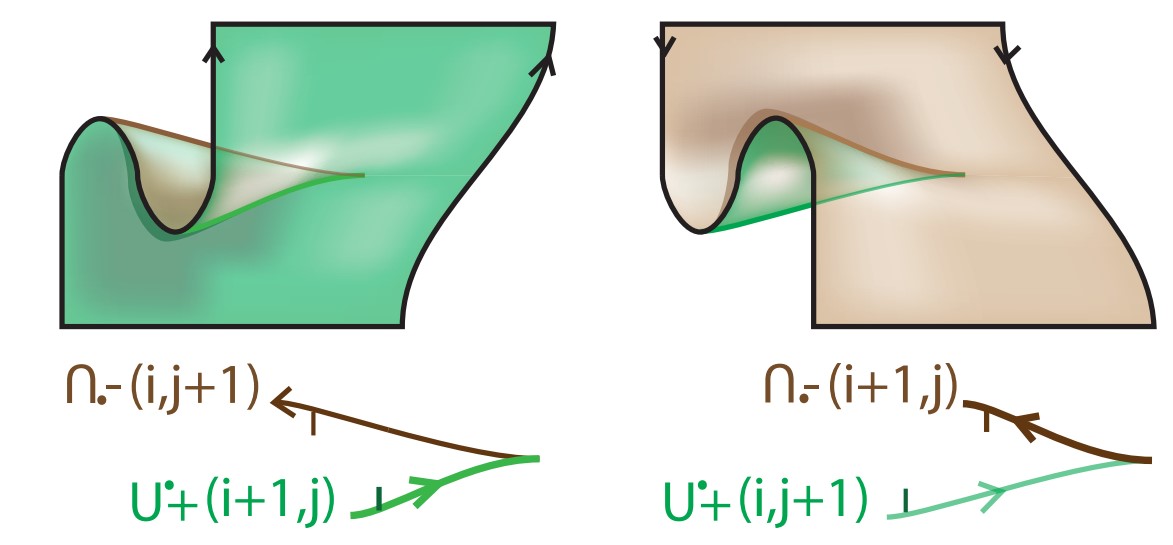} 
\caption{Examples of cusps and their corresponding chart vertices}
\label{fig:L}
\end{figure}

\item Valence three vertices that correspond to the branch points created by
type-I moves, are the junction of a green fold, a brown fold, and a crossing.
In drawing the graph, the fold set seems to run straight through at this
vertex. However, the orientation of the folds is inconsistent at a branch point. Branch points in the chart correspond to Reidemeister type-I moves
in a movie presentation, Figure \ref{fig:M}.
\begin{figure}[H]
\includegraphics[height=3.5cm]{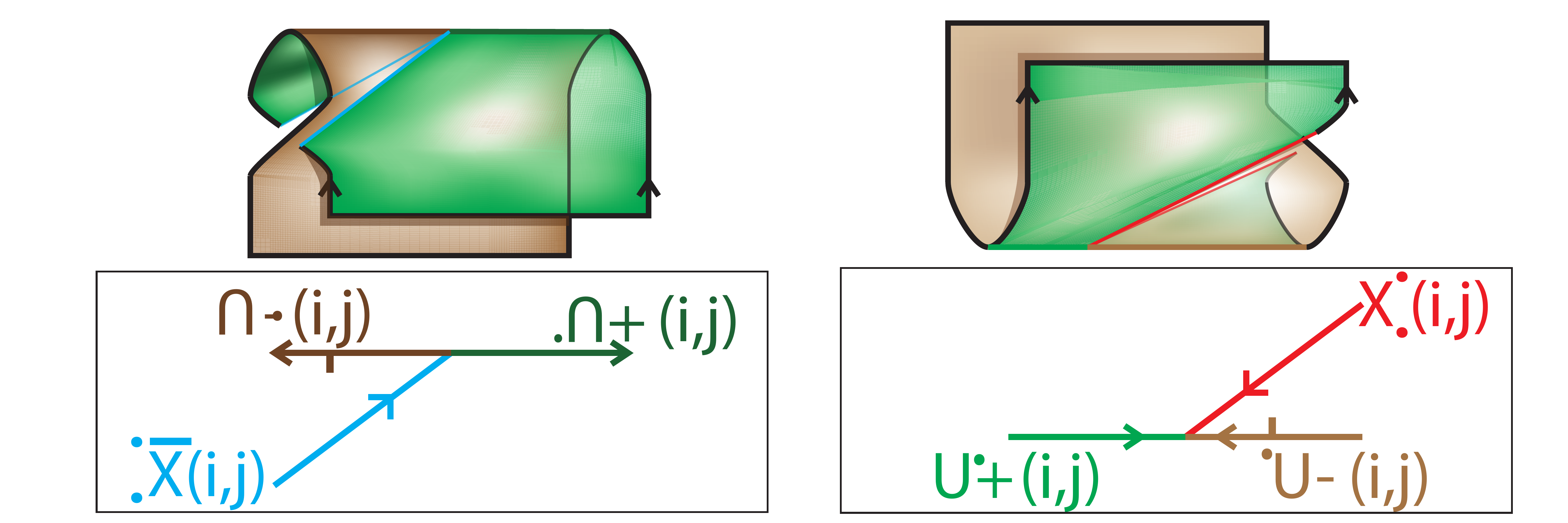}
\caption{Two examples of type-I chart vertices}
\label{fig:M} 
\end{figure}

\item Valence two vertices arising from type-II moves correspond to critical
points of the crossing points. The incident crossings are colored red and
blue, and the orientation flows through the vertex, Figure \ref{fig:N}.
\begin{figure}[H]
\includegraphics[height=4.5cm]{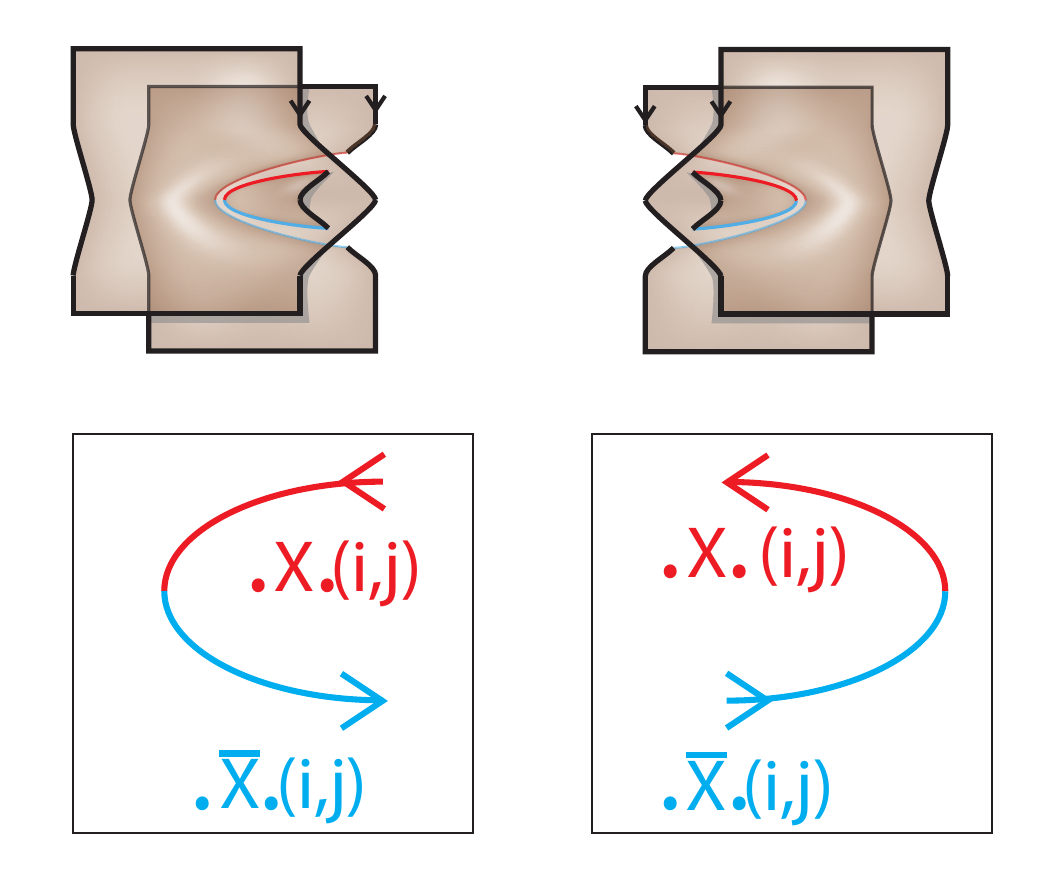}
\caption{Two examples of type-II chart vertices}
\label{fig:N} 
\end{figure}

\item Valence four vertices that indicate a crossing point passing over a fold
involve two crossings and two folds. Both pairs of edges are oriented
consistently. The bookmark codes change along the four incident edges, Figure \ref{fig:O}.
\begin{figure}[H]
\includegraphics[height=4cm]{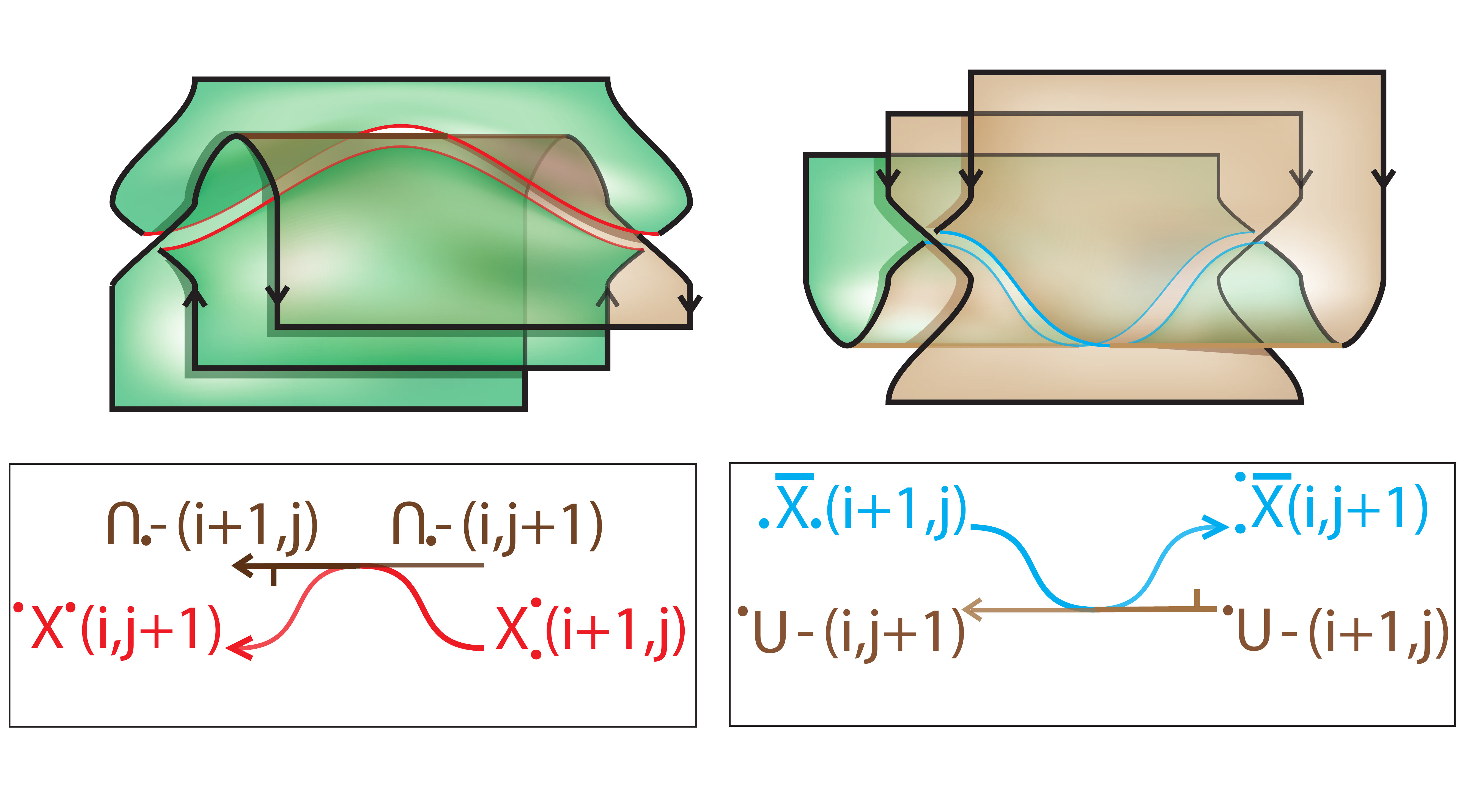}
\caption{Two examples  of $\psi$-move chart vertices}
\label{fig:O} 
\end{figure}

\item \label{item: typeIII} Valence six vertices correspond to triple points when surface F is projected
into the $(x, y, s)$  3-space. Recall that the $z$-coordinate is used for the crossing sense of the stills in a movie. These vertices correspond
to Reidemeister type-III moves.
\begin{figure}[H]
\includegraphics[height=4cm]{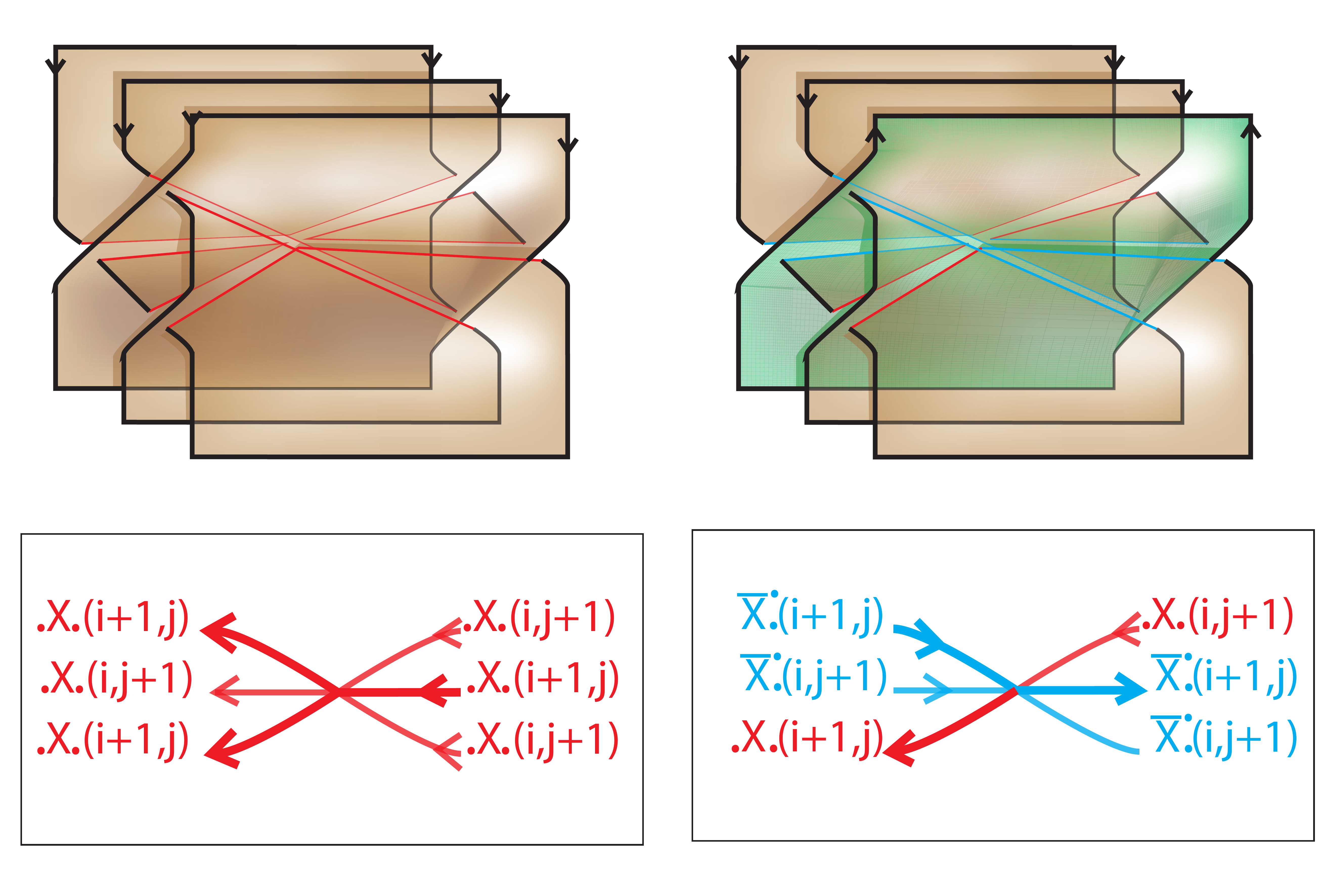}
\caption{Two examples of type-III chart vertices}
\label{fig:P} 
\end{figure}

\item There are valence two vertices that correspond to nodes. A source and a
sink are indicated.
\begin{figure}[H]
\includegraphics[height=4cm]{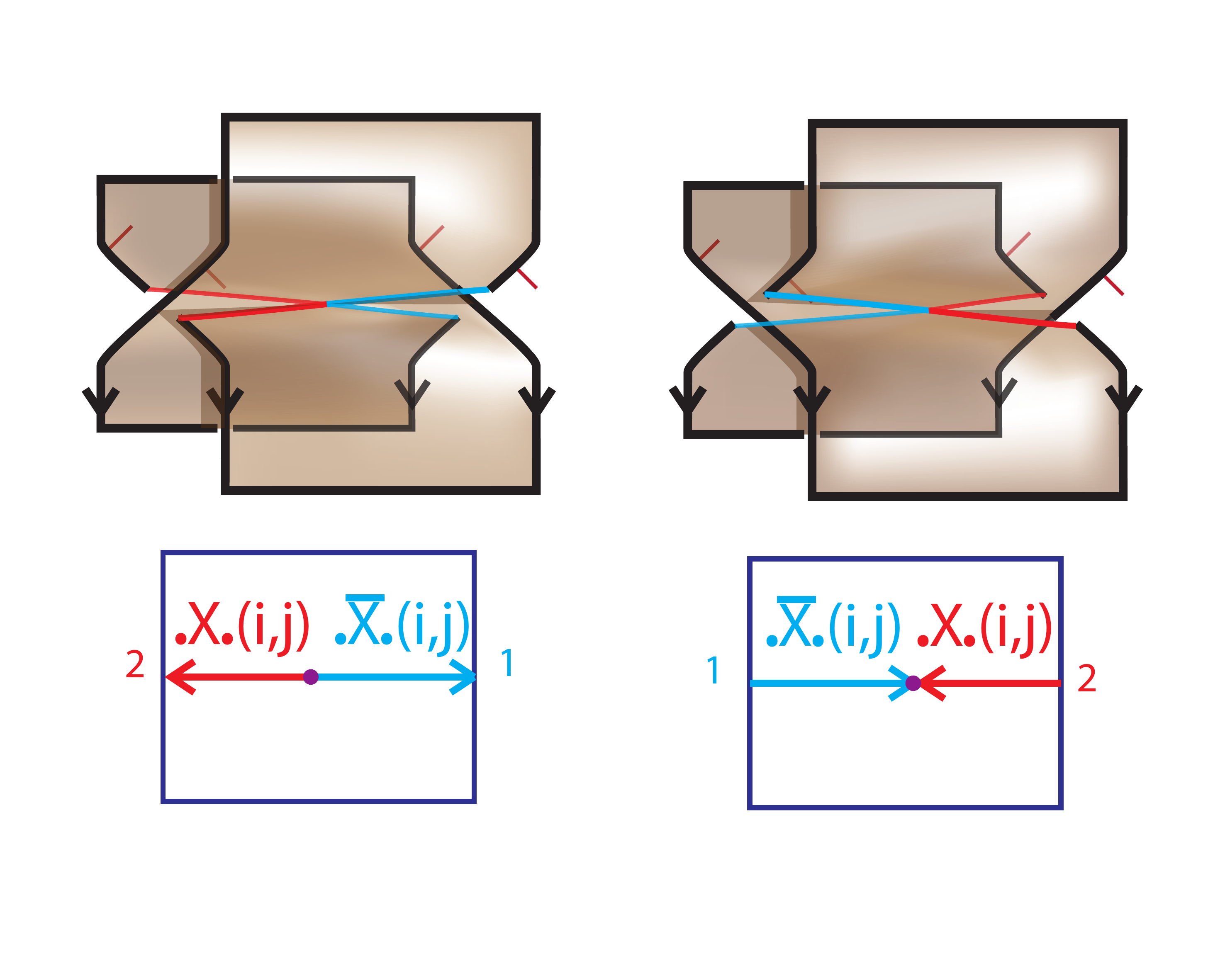}
\caption{A source node (left) and a sink (right)}
\label{fig:Q} 
\end{figure}

\end{enumerate}

Note that double point curves and folds that have disparate bookmark codes may also
cross. To understand them, think, for example, of the braid relation $\sigma_i\sigma_j = \sigma_j\sigma_i$
for $1 < |i - j|$. See Figure \ref{fig:D}.

\subsection{The movie move theorem}
We are in a position to formulate the main result of this section. The figures following the statement of the theorem include both the chart moves and the movie moves. 

\begin{theorem} \label{thm: movie moves}
The movie moves that describe isotopies of immersed surfaces are of the following types:
\begin{enumerate}
\item MM1-MM15: the Carter-Saito movie moves for embedded surfaces \cite[2.6]{CSKnottedSurfaces}
\item MM16: passing a node over a type-II move, Figure \ref{fig:B},
\item  MM17: passing a node through a type-III move, Figure \ref{fig:C},
\item MM18: passing a node over a fold, Figure \ref{fig:A},
\item   moves that correspond to sliding a node along a horizontal segment of a double point curve; see for example, Figures \ref{fig:D}-\ref{fig:G}. 
\end{enumerate}
\end{theorem}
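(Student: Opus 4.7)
The plan is to follow the singularity-theoretic approach of Carter--Rieger--Saito \cite{CRS}, augmented by the observation that a double-point singularity is a \emph{stable} (codimension $0$) local feature in the space of proper immersions. The overall strategy: given an ambient proper isotopy between two immersed surface cobordisms, perturb it so that all but finitely many time slices produce a generic retinal chart in the sense of \S\ref{sec:charts}, and then enumerate the codimension-$1$ events occurring at the exceptional times. Each such event, read in the $(s,y)$-chart, realizes one of the moves listed in items (1)--(5).

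First I would invoke the standard singularity theory of Goryunov, Rieger, and West, which classifies the codimension-$1$ degenerations of a generic projection of an embedded surface in $\R^4$ to $\R^2$: these are exactly the events giving rise to the classical moves MM1--MM15. Since a node is itself a stable local feature of the immersion, passing from embedded to immersed surfaces does not destroy any of these classical events; it only introduces new codimension-$1$ events, namely those in which the node vertex of Figure \ref{fig:Q} collides with some other chart feature. Generically at most one such collision occurs at each exceptional parameter, so the enumeration reduces to a local problem: classify the codimension-$1$ normal forms of a generic one-parameter family of immersed surfaces in a neighborhood of a node.

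The possibilities are: (a) a node crossing a fold edge transversally, which is MM18; (b) a node coinciding with a critical point of a crossing curve, which is exactly the type-II picture MM16; (c) a node passing through a triple-point vertex, which is MM17; and (d) a node sliding along its own double-point edge across a transverse intersection with another chart edge whose bookmark codes commute, which yields the node-sliding moves of item (5), with local models controlled by the braid-type relation $\sigma_i\sigma_j=\sigma_j\sigma_i$ for $|i-j|>1$ indicated in Figure \ref{fig:D}. The remaining potential interactions, namely a node meeting a cusp, a branch point, a birth/death arc, or another node, must be shown to be of codimension $\geq 2$, or to be resolvable by a small perturbation followed by a sliding move of type (5). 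This uses the fact that the two sheets through a node are locally embedded and carry no jet data beyond their orientation sign $\pm$, so the node can always be pushed off any other feature to leading order.

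The hard part will be the exhaustive bookkeeping in the previous step: on one hand, verifying that no additional independent codimension-$1$ events arise (in particular that node--cusp and node--branch-point coincidences are redundant modulo MM18 and the sliding moves); on the other hand, cataloguing the orientation and bookmark sub-cases of MM16--MM18, much as each classical Carter--Saito move decomposes into a list of oriented variants (compare the case analyses in the proofs of MM16 and MM17 in \S\ref{ssec-moviemoves}). These variants are generated from a small number of local chart models by the symmetries of the chart data and the choice of node sign, and the verification is purely local once the classical Carter--Saito list and the new node vertex are in hand.
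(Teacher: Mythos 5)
Your overall strategy --- reduce to retinal charts, cite the Goryunov--Rieger--West classification for the embedded events MM1--MM15, and then enumerate the codimension-one interactions of the node vertex with the rest of the chart --- is the same as the paper's, and your items (a)--(d) correctly identify MM18, MM16, MM17 and the sliding moves of item (5). Two points in your case analysis would go astray if carried out as written.

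First, the node-meets-branch-point case. You propose to show it has codimension $\geq 2$ or to perturb it away. Neither is the right mechanism: the node is a genuine double point of the immersed surface (not merely of its planar projection), so the number of nodes is an invariant of ambient isotopy. If a node reached the endpoint of its double point arc (a type-I vertex), the node would be resolved and the count would drop, as in Figure \ref{fig:R}; hence this event simply cannot occur in any isotopy, generic or otherwise. It is this topological exclusion, not a transversality count, that closes the enumeration, and a codimension argument alone would not establish it.

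Second, the node--cusp, node--birth/death and node--saddle coincidences are not candidates for exclusion or for resolution by perturbation. In the chart these vertices sit at $y$-levels different from the node's; the codimension-one event is only that their $s$-coordinates coincide, and passing through such an event is precisely one of the commutation moves of item (5) (see Figures \ref{fig:F} and \ref{fig:G}). The structural fact that organizes all of this, and which you should state explicitly, is that double point arcs are horizontal in the $(s,y)$-plane and a node can travel only along its own arc; the new-move list then falls out of asking which chart features the node actually passes through (a vertex lying on its arc: MM16--MM18) versus which it merely interchanges $s$-coordinates with (everything else: item (5)).
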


In this theorem, the new moves that involve nodes of immersed surfaces are those of types (2)-(5).

There are variations of each of these moves that depend, for example,
upon differing orientations, directions of the type-II moves, or whether the fold
involved is a local maximum or minimum. The reader is encouraged to investigate these variations. We caution that nodes may not pass through the
bottom or top arc at a type-III move.

We have labeled the move in Fig.~\ref{fig:D} as MM19 since, in this case, the node interacts directly with a vertex of the chart even though the vertex is an interchange of distant crossings. 
Both Figures~\ref{fig:F} and~\ref{fig:G} demonstrate that a node commutes with a saddle point in the fold set. There are other, analogous types of commutation relations of this type: a node commutes with critical points of the fold set which could be a 
birth, death, or a cusp. A node also commutes with a type I, II, or III move if the double point curve upon which the node sits is not involved in the Reidemeister move. As explained in the proof below, all these cases can be understood systematically as the horizontal coordinate of the node interchanging with one of these chart vertices.

\begin{figure}[H]
\includegraphics[height=4cm]{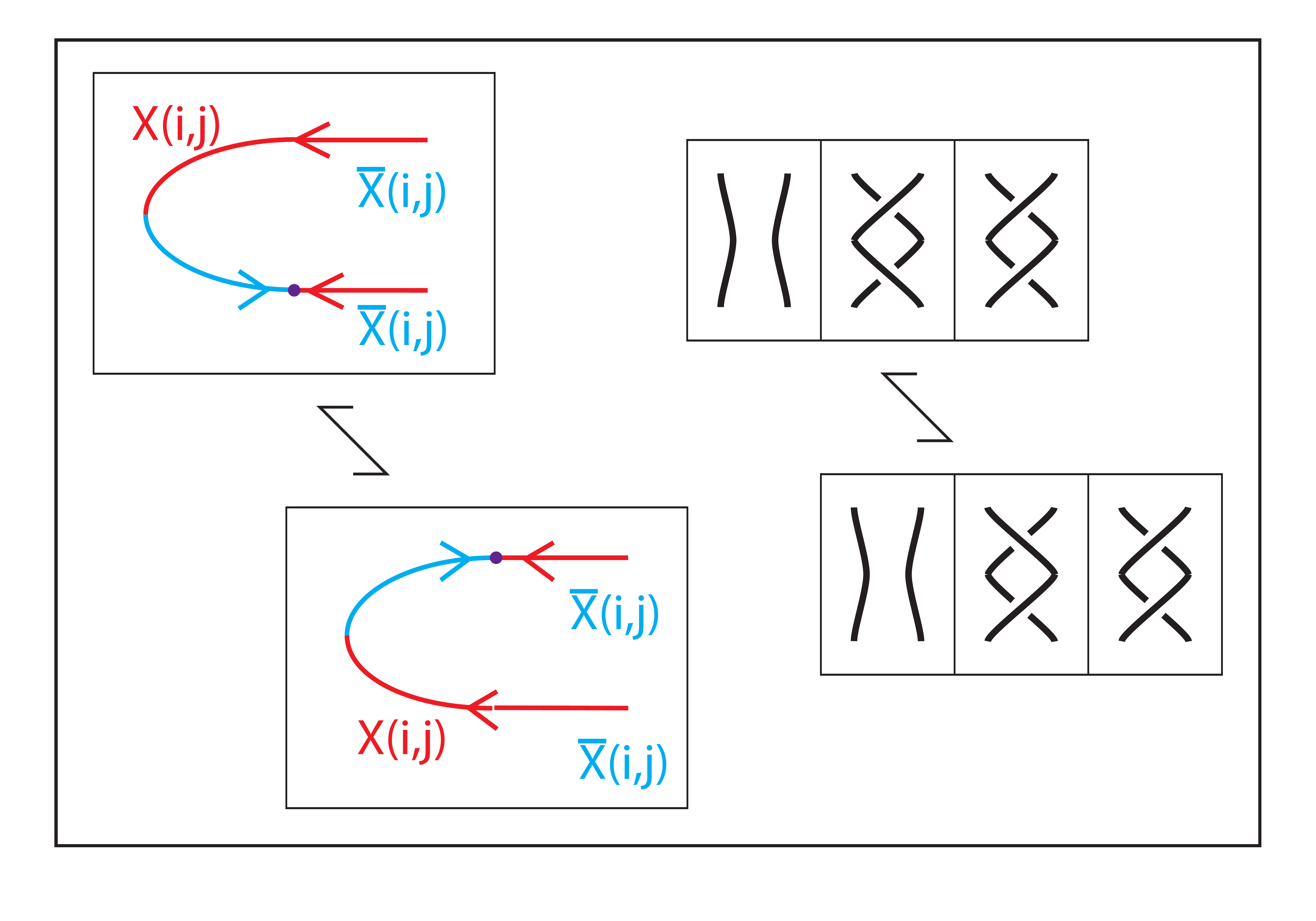}
\caption{MM16: passing a node over a type-II move.}
\label{fig:B} 
\end{figure}

\begin{figure}[H]
\includegraphics[height=4cm]{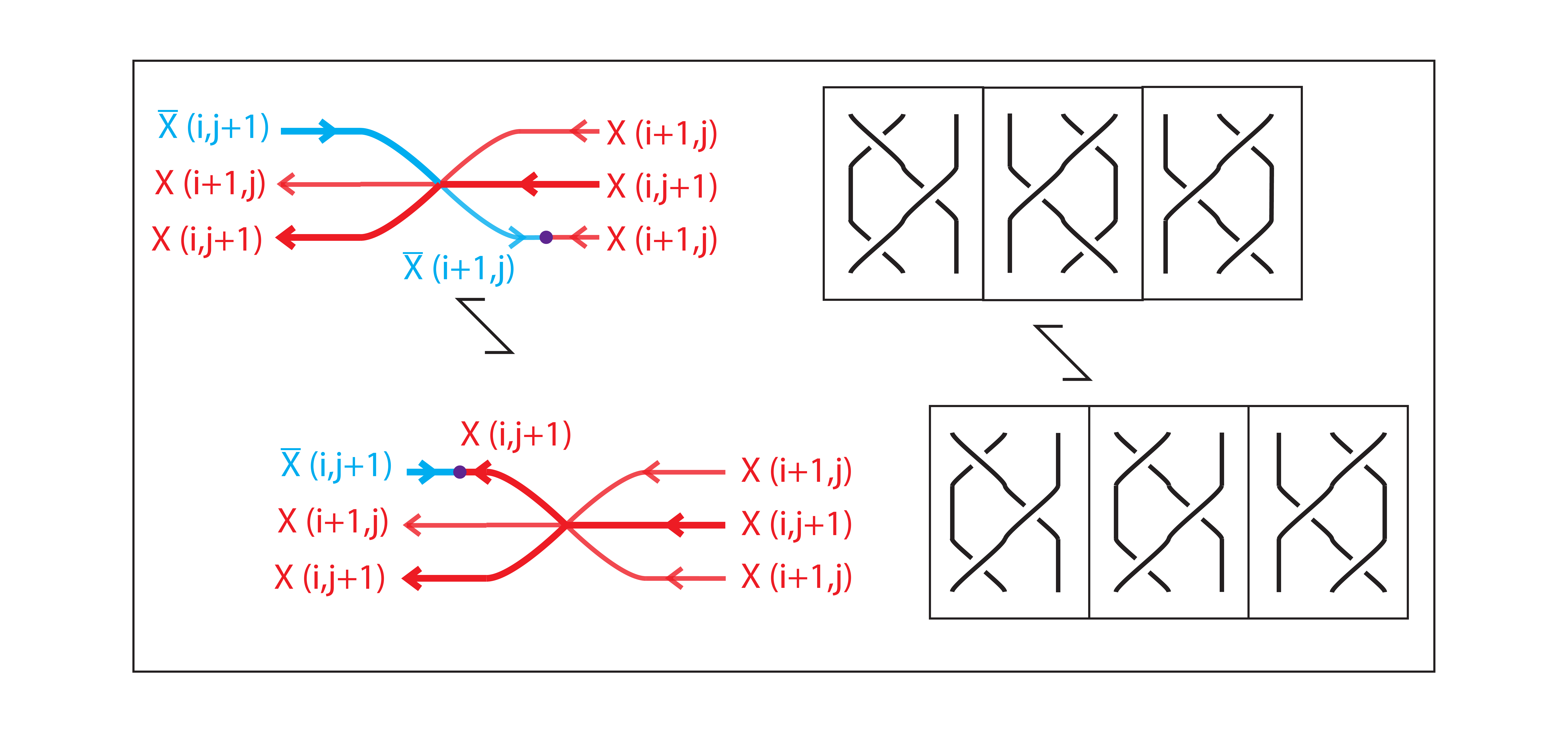}
\caption{MM17: moving a node through a triple point.}
\label{fig:C} 
\end{figure}

\begin{figure}[H] 
\includegraphics[height=5cm]{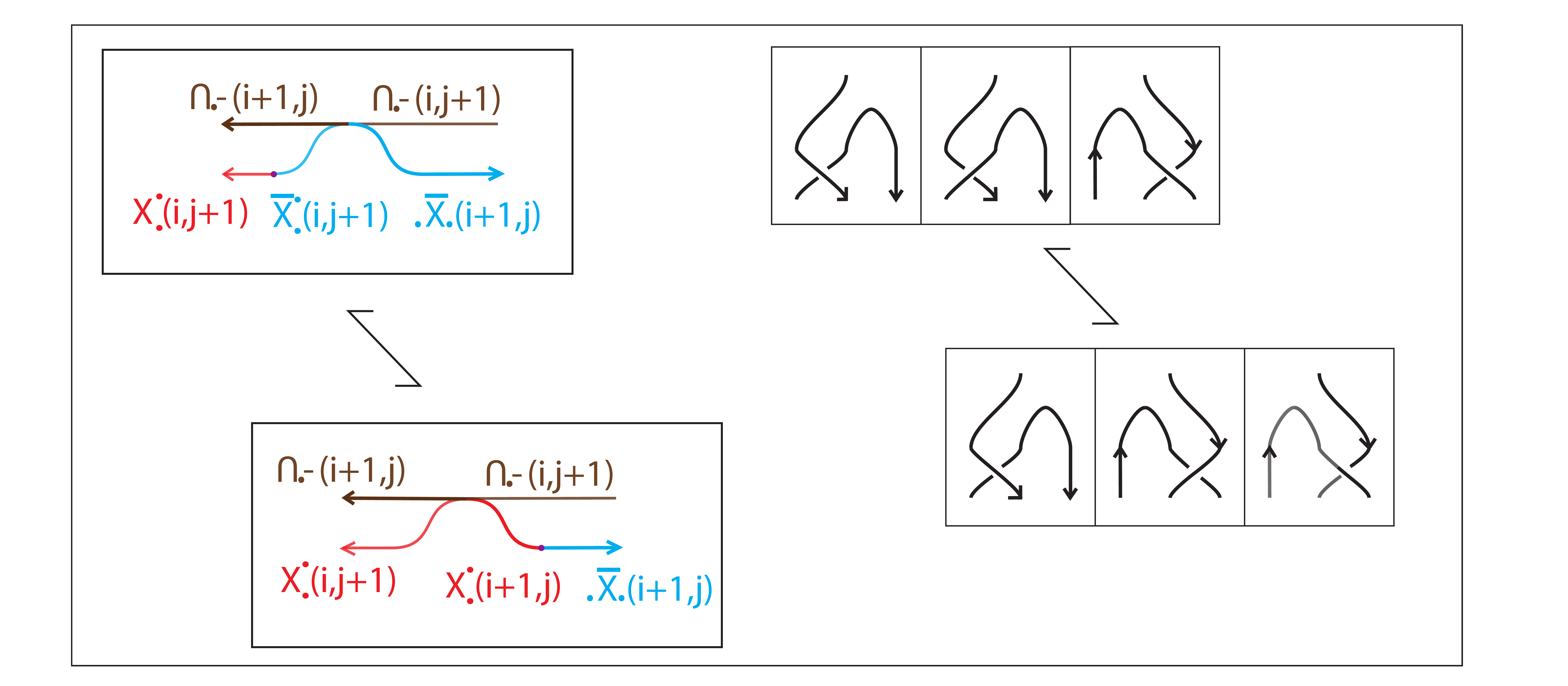}
\caption{MM18: moving a node over a fold.}
\label{fig:A} 
\end{figure}

\begin{figure}[H]
\includegraphics[height=3.5cm]{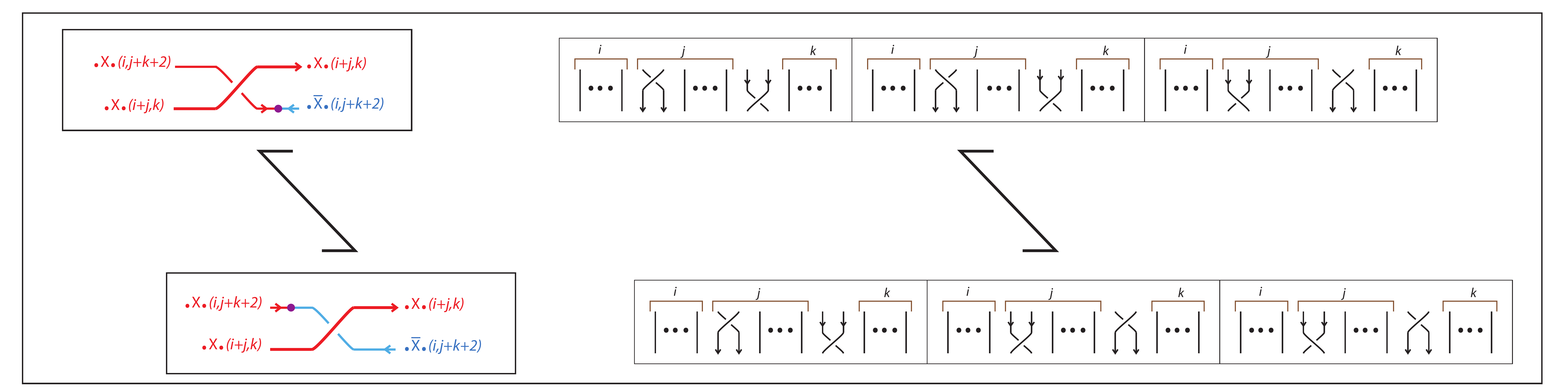}
\caption{MM19: Interchanging a node and a distant crossing.}
\label{fig:D} 
\end{figure}

\begin{figure}[H]
\includegraphics[height=4cm]{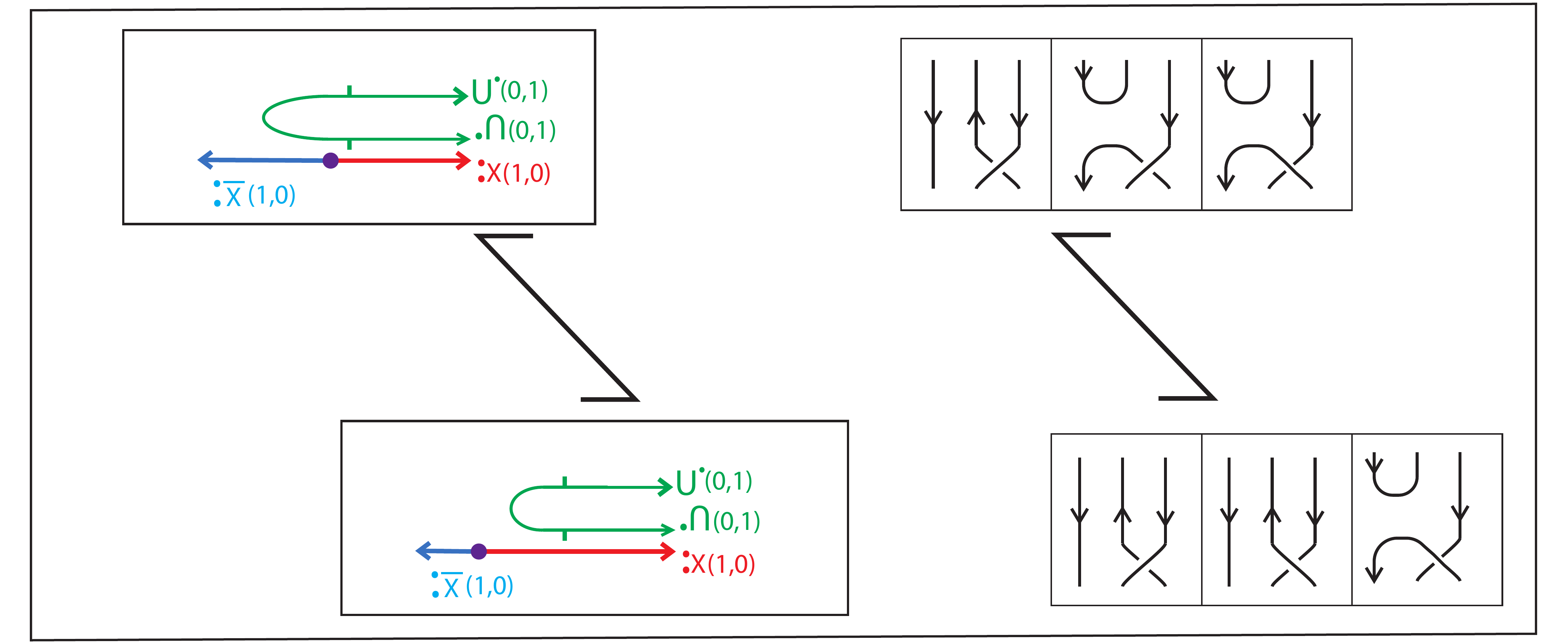} 
\caption{Commutation of a critical point and a node, version 1.}
\label{fig:F}
\end{figure}

\begin{figure}[H]
\includegraphics[height=5cm]{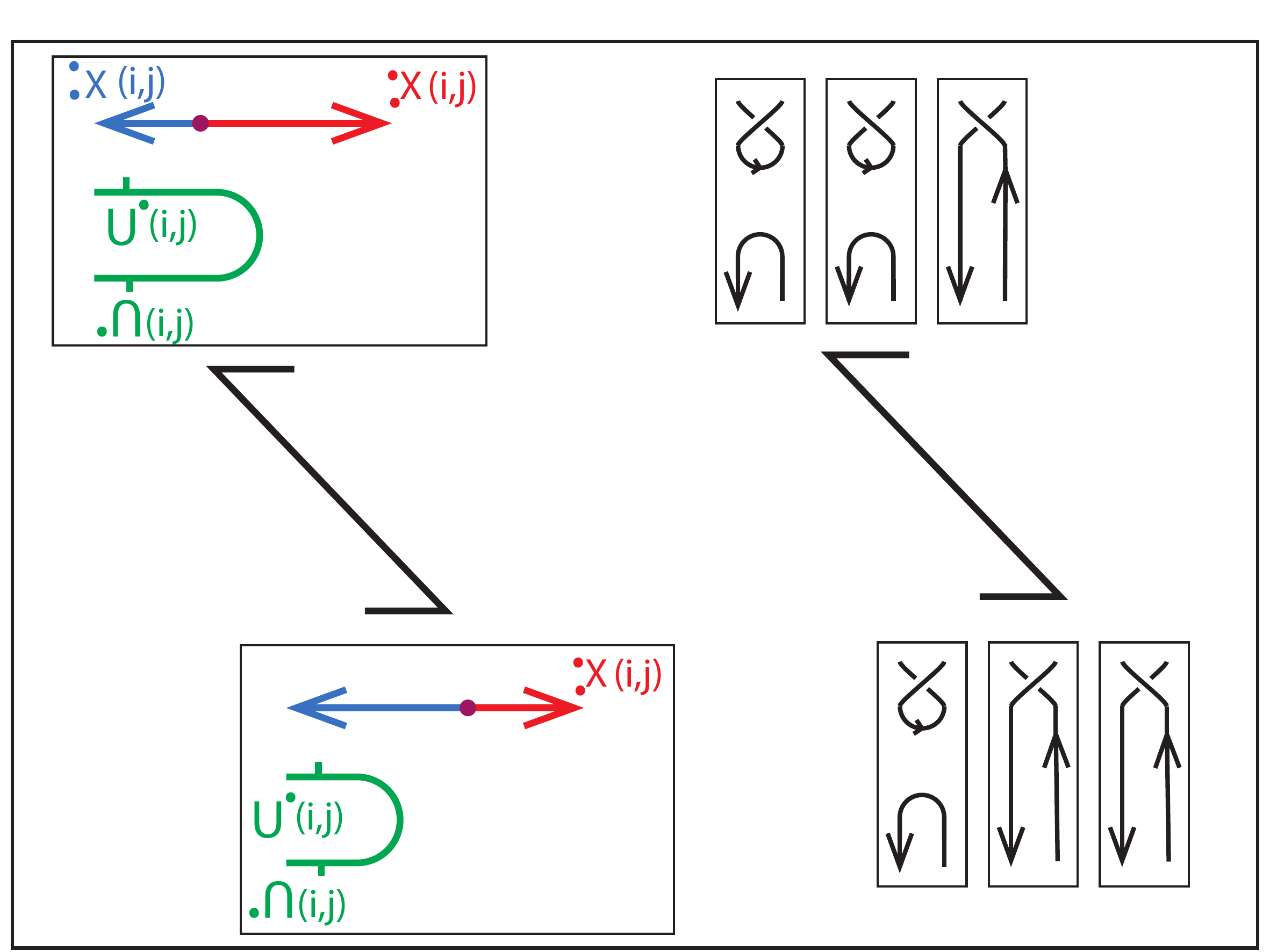} 
\caption{Commutation of a critical point and a node, version 2.}
\label{fig:G}
\end{figure}

\noindent
{\em Proof of Theorem \ref{thm: movie moves}.} The double point curves that appear
among the edges of a chart form a 1-dimensional manifold with boundary that
is immersed into the $(s, y)$-plane. The immersion is not proper nor is it in
general position. End points of double point arcs occur at the junction of folds
of valence 3 vertices — type-I moves.

Non-generic double points of the immersed double curves occur at the valence 6 vertices of the chart which represent triple points — type-III moves, see item  (\ref{item: typeIII}) and Figure \ref{fig:P} in Section \ref{subsubsec: vertices}.
While they are non-generic, they are transverse. Antipodal arcs at
a valence 6 vertex are components of the same double point arc. 

The nodes of the chart are a 0-dimensional subset of the 1-dimensional crossing set. A key observation is that during an isotopy of a surface with nodes, the nodes
can move only along the double point arcs. In the chart, double point arcs are
horizontal in the $(s,y)$-plane. In Figure \ref{fig:B} it appears that a node passes through a point of vertical tangency of a double point curve. However, the point of vertical tangency is considered a vertex of the chart graph. 
Generically, the vertices of the chart have distinct
$s$-coordinates. As a node slides along a double point arc, its $s$-coordinate can
interchange with the $s$ coordinate of another vertex. In Figure \ref{fig:D} above, the
node slides beyond the crossing of a pair of double curves. In Figures \ref{fig:F} and \ref{fig:G}
it interchanges $s$ coordinates with a saddle. 
As discussed after the statement of the theorem, there are other analogous kinds of interchanges.

The remaining moves that involve nodes occur when a node passes through
a vertex.  These occur precisely when a node passes through a
type-II or type-III move, or passes through a $\psi$-move. These are, respectively, MM16 (Figure \ref{fig:B}),
MM17 (Figure \ref{fig:C}) and 
MM18 (Figure \ref{fig:A}).

Note that if a node were to pass through a type-I move, then the node would disappear,
see Figure \ref{fig:R}. This is impossible in an isotopy of a surface (because the number of double points is always preserved).
\begin{figure}[H]
\includegraphics[height=5cm]{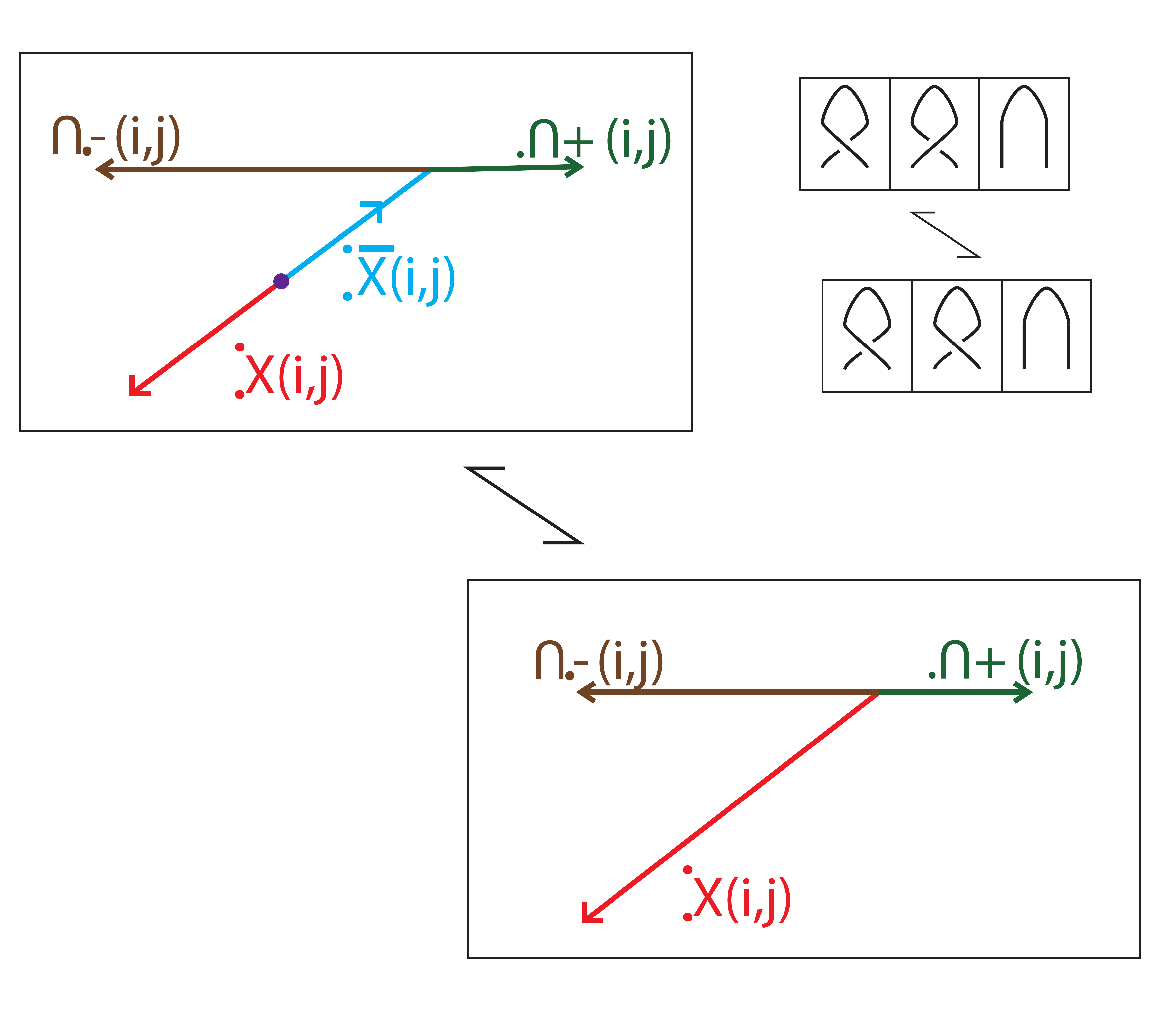} 
\caption{A node passing through a branch point changes the
number of nodes.}
\label{fig:R}
\end{figure}

This completes the proof of the theorem.
\qed

\bibliography{doublepoint}{}
\bibliographystyle{amsalpha}

\end{document}